\theoremstyle{plain} % definition 
\newtheorem{lemma}[equation]{Lemma} 
\newtheorem{proposition}[equation]{Proposition} 
\newtheorem{theorem}[equation]{Theorem} 
\newtheorem{corollary}[equation]{Corollary} 
\newtheorem{priorResults}{Theorem}
\theoremstyle{definition}
\newtheorem{definition}[equation]{Definition}
\theoremstyle{remark}
\newtheorem{remark}[equation]{Remark}
\newtheorem*{ack}{Acknowledgment}
\numberwithin{equation}{section}
\title[Random Embeddings] {Random Tessellations, Restricted Isometric Embeddings, and One Bit Sensing}
\author{Dmitriy Bilyk}   %  can use \and  
\address{School of Mathematics, University of Minnesota, Minneapolis MN 55455, USA}
\email {}
\thanks{Research supported in part by}
\author{Michael T. Lacey}   %  can use \and  
\address{ School of Mathematics, Georgia Institute of Technology, Atlanta GA 30332, USA}
\email {lacey@math.gatech.edu}
\thanks{Research supported in part by grant NSF-DMS 1265570. }
\begin{document}

\begin{abstract}
We obtain mproved bounds for one bit sensing.  For instance, let $ K_s$ denote the set of  $ s$-sparse 
unit vectors in the sphere $ \mathbb S ^{n}$
in dimension $ n+1$ with sparsity parameter $ 0 < s < n+1$ and assume that $ 0 < \delta < 1$. We show that for    $ m \gtrsim \delta ^{-2} s \log \frac ns$,  the one-bit map   
\begin{equation*}
x \mapsto   \bigl[  \textup{sgn} \langle x,g_j \rangle \bigr] _{j=1} ^{m} ,
\end{equation*}
where $ g_j$ are iid gaussian vectors on $ \mathbb R ^{n+1}$,  with high probability has $ \delta $-RIP from  $ K_s$  into the $ m$-dimensional Hamming cube.   
These bounds match the bounds for the \emph{linear} $ \delta $-RIP  given by $ x \mapsto \frac 1m[\langle x,g_j \rangle ] _{j=1} ^{m} $, from the 
sparse vectors in $ \mathbb R ^{n}$ into $ \ell ^{1}$.  In other words,  the one bit and linear RIPs are equally effective. 
There are corresponding improvements for other one-bit properties, 
such as  the sign-product RIP property.  

\end{abstract}

	\maketitle  
\setcounter{tocdepth}{1}\tableofcontents

%%%%%%%%%%%%%%%%%%%%%%%%%%%%%% SECTION  SECTION SECTION 
\section{Introduction} %\label{s:}

We study questions associated with one bit sensing and dimension reduction. 
We shall  work with subsets $ K $ of the unit sphere $ \mathbb S^{n}$ in dimension $ n +1$.  A natural 
class of such subsets, which arises in compressed sensing, are the \emph{$ s$-sparse vectors} 
\begin{equation}\label{e:sparse}
K _{s} := \{  x = (x_1 ,\dotsc, x_{n+1}) \in \mathbb S ^{n} \;:\;   \sharp \{ j \;:\; x_j \neq 0\}\leq s \},
\end{equation}
 where $0<s<n+1$
 
We denote by  $ d (x,y)$ the  geodesic distance on $ \mathbb S ^{n}$,  normalized so that the distance between antipodal points is one,  that is 
\begin{equation}
d(x,y) = \frac{\cos^{-1} ( x\cdot y) }{\pi}.
\end{equation}

For  iid uniform samples $ \theta _ j$, $ 1\le j \le m$,  we will consider the  linear map 
\begin{equation} \label{e:A}
A x =  \frac 1 m \begin{bmatrix}
\langle  \theta _j  ,x  \rangle
\end{bmatrix} _{1\le j \leq m}
\end{equation}
from $ \mathbb R ^{n+1}$ to $ \mathbb R ^{m}$, where  $ \mathbb R ^{n+1}$ is equipped with  the usual Euclidean $ \ell ^2 $ metric, while  we use the $ \ell ^{1}$ metric
on $ \mathbb R ^{m}$.

More generally, we will consider the \emph{sign-linear map}
\begin{equation} \label{e:sgnA}
\textup{sgn} (Ax) := \begin{bmatrix} \textup{sgn} (\langle  \theta _j  ,x  \rangle)   \end{bmatrix} _{1\le j \leq m}
\end{equation}
which is a map from $\mathbb R^{n+1}$ into the Hamming cube $ \{ -1, 1\} ^{m}$   with the 
metric 
\begin{equation}\label{e:dhamming}
 d _{H} (x,y) = \frac 1 {2m} \sum_{i=1} ^{  m } \lvert  x_i - y_i\rvert  = \frac{\{ 1\le j \le m:\, x_j \neq y_j \} }{m} , 
 \end{equation} i.e. the fraction of coordinates 
in which $ x, y \in \{-1, +1\} ^{m}$ differ.  In other words, a sign-linear map keeps only one bit of information from each linear measurement.     This is an example of one-bit sensing, 
a subject initiated by Boufounos and Baraniuk \cite{1Bit}, and further studied by 
several authors \cites{13051786,MR3043783,MR3069959,MR3164174}. 

The motivation for the one-bit map is that it is a canonical non-linearity in measurement, 
and so its study opens the door for a broader non-linear theory in compressed sensing.  
It has proven to be useful in other settings, see the applications of one-bit sensing  in 
\cites {14078246,13051786}.  Furthermore, this topic has deep connections to 
the Dvoretsky Theorem, as explained in \cite{MR3164174}, as well as important relations to geometric
discrepancy theory, explored  in a companion paper of the authors \cite{BL1}.

The basic question,  with different meanings and interpretations being 
attached to  it,  is how well these maps preserve the structure of $ K$.  Generally, it is also of interest to study this question when the iid samples $ \theta _j$ are, for instance,
standard gaussians on $ \mathbb R ^{n+1}$.  However, in the context of the one-bit map, the magnitudes of both $ x$ and $ \theta _j$ 
are lost, so that we concern ourselves mostly with the $ \theta _j$ being uniform on the sphere $ \mathbb S ^{n}$. 

The first crude property begins with the observation that the map $ \textup{sgn} (\langle \theta _j , x \rangle)$
divides $ \mathbb S ^{n}$ into two hemispheres, bounded by the hyperplane $ \theta _j ^{\perp}$. 
Therefore, a number of one bit observations induce a tessellation of a subset $ K\subset \mathbb S ^{n}$ into cells 
bounded by the hyperplanes.  Two points are in the same cell if and only if their sequence of one bit measurements 
are the same.  See Figure~\ref{f:cell}.

%%%%%%%%%%%%%%% Figure
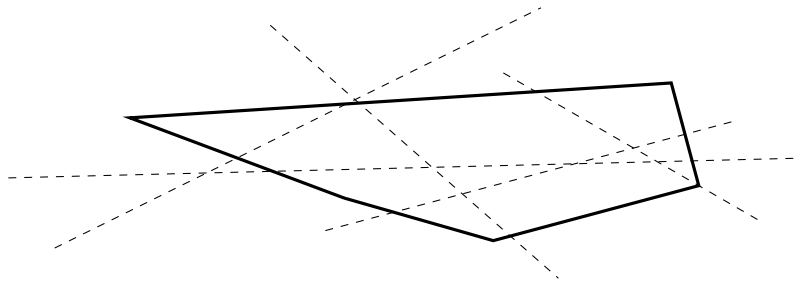
\begin{figure}
\begin{tikzpicture}[rotate=-30]
\draw[very thick] (0,-0.5) -- (2,0) --  (4,2) -- (3,3)  -- (-3,-1)  -- (0,-0.5);  
\draw[dashed] (1,2) -- (5,2);  
\draw[dashed] (0, -1) -- (4,3);   
\draw[dashed] (-3,-3) -- (1,3);  
\draw[dashed] (-2,1) -- (3,0);
\draw[dashed] (-4,-2.5) -- (5,3);  
\end{tikzpicture}

\caption{A region $ K$, bounded by the solid line, with an induced tessellation from hyperplanes represented by the 
dashed lines.}
\label{f:cell}
\end{figure}
%%%%%%%%%%%%%%% Figure

%%%%%%%%%%%%%%%%%%%%%%%%%%%%%%  DEFINITION DEFINITION DEFINITION
\begin{definition}\label{d:tesselation}  
We say that the tessellation of $ K$ induced by $ \{\theta _j\}$ is 
has \emph{$ \delta $-small cells} if for all $ x, y \in K$, if $  \textup{sgn} \langle x,\theta _j \rangle=\textup{sgn} \langle y,\theta _j \rangle$, for all $ j$, then $ d (x,y) < \delta $. Equivalently, all the cells of the induced tessellation have maximal 
diameter at most $ \delta $. 
\end{definition}
%%%%%%%%%%%%%%%%%%%%%%%%%%%%%%  DEFINITION DEFINITION DEFINITION

One can then ask how small the cells are in the induced tessellation of $ K$.    
The relevant characteristic  of $ K$ that will control the  this property is the 
\emph{metric entropy} $ N (K, \delta  )$,  i.e. the least number of $ d $-balls of radius $ \delta >0 $ 
 needed to cover $ K$.    
 
 We shall repeatedly make use of the following observation. Let $M(K, \delta)$ be the maximal cardinality  of a $\delta$-separated subset $K_0 \subset K$  (i.e. any  $x$, $y \in K_0$ with $x\neq y$ satisfy $d(x,y) > \delta$). Then the following obvious inequalities hold.
\begin{equation}\label{e:capacity}
M (K, 2\delta) \le N(K, \delta) \le M(K,\delta).
\end{equation}

 The next result  should  be compared to  \cite{MR3069959}*{Thm 4.2}, 
where the bound on  $ m$  is substantially larger, both in terms of  the power of $ \delta $, as well as in the fact that it involves the gaussian mean 
width, see \eqref{e:gmw},  instead of the smaller  metric entropy of $ K$.

%%%%%%%%%%%%%%%%%%%%%%%%%%%%%% THEOREM THEOREM THEOREM
\begin{theorem}\label{t:tessellation}[$ \delta $-Small Cells]  There are constants $ 0 < c < 1 < C$ so that for all integers $ n\ge 2$,  
for any $ K \subset \mathbb S^{n}$, and any $ 0 < \delta < 1$,  if $ m > C \delta ^{-1} \log N (K, c \delta )$, 
then with probability at least   $ 1 -  [ 2N (K, c\delta )]  ^{-2} $ the following  holds: 
%%  ENUMERATE
\begin{enumerate}
\item     The random vectors 
$  \{ \theta _j\} _{j=1} ^{m}$  induce  a   tessellation  of $ K$ with $ \delta $-small cells. 

\item  For any pair of points $ x, y\in K$, with $ \lvert  x-y\rvert > \delta  $, 
there are at least $ c \delta m$  choices of $ j$ such that 
\begin{equation*}
\langle x,  \theta _j \rangle < - c \delta < c \delta < \langle y, \theta_j   \rangle.  
\end{equation*}

\item  And,  for iid gaussians $ g_j$,  are  
 at least $ c \delta m$  choice of $ j$ with 
\begin{equation*}
\langle x,  g _{j } \rangle < - c \delta \sqrt n  < c \delta  \sqrt n  < \langle y, g _{j } \rangle.  
\end{equation*}

\end{enumerate}
%% ENUMERATE

\end{theorem}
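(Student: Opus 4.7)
The plan is to prove (2) directly and derive both (1) and (3) from it. Claim (1) is immediate from (2) since a fat separation is in particular a sign separation; claim (3) will follow from (2) by the polar decomposition $g_j = |g_j|\,u_j$ with $u_j = g_j/|g_j|$ uniform on $\mathbb{S}^n$, combined with the concentration $|g_j|\asymp\sqrt n$.

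For (2) I would carry out the classical net-and-Chernoff scheme. The first and crucial ingredient is a single-sample estimate: for fixed $x,y\in\mathbb{S}^n$ with $|x-y|>\delta$ (so the spherical angle $\alpha\gtrsim\delta$), I would show that for suitably small absolute constants $c',c_1>0$,
\[
  p \;:=\; \mathbb{P}_\theta\bigl(\langle x,\theta\rangle<-c'\delta,\ \langle y,\theta\rangle>c'\delta\bigr) \;\geq\; c_1\,\delta.
\]
This reduces to a planar computation in $V=\mathrm{span}(x,y)$: the projection of uniform measure on $\mathbb{S}^n$ has uniform angular part, so the separating lune contributes angular fraction $\alpha/\pi\gtrsim\delta$, and for $c'$ chosen properly the margin constraints cut out only a constant fraction of the lune's measure, even after integrating against the radial density (which concentrates near the origin of $V$ in high dimension). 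A one-sided Chernoff bound for the resulting binomial count then says that, for any fixed pair, the number of indices $j$ with fat separation falls below $c_1\delta m/2$ with probability at most $\exp(-c_2\delta m)$.

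Next I would take a $c''\delta$-net $K_0\subset K$ with $|K_0|\le N(K,c''\delta)$ (via \eqref{e:capacity}), where $c''$ is chosen small relative to $c'$. Applying the Chernoff estimate to every pair $(x',y')\in K_0\times K_0$ with $d(x',y')\ge(1-2c'')\delta$ and union-bounding over at most $N(K,c''\delta)^2$ pairs, the total failure probability is bounded by $N(K,c''\delta)^2\exp(-c_2\delta m)$, which is below $[2N(K,c''\delta)]^{-2}$ once $m\ge C\delta^{-1}\log N(K,c''\delta)$ for $C$ large. To pass from the net to $K$: for arbitrary $x,y\in K$ with $|x-y|>\delta$ choose $x',y'\in K_0$ with $d(x,x'),d(y,y')\le c''\delta$, so $d(x',y')\ge(1-2c'')\delta$. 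On the good event, at least $c_1\delta m/2$ indices $j$ give $\langle x',\theta_j\rangle<-c'\delta$ and $\langle y',\theta_j\rangle>c'\delta$; the deterministic estimate $|\langle x-x',\theta_j\rangle|\le|x-x'|\le\pi c''\delta$, together with $c''\ll c'$, then yields the target margin $c\delta$ at $(x,y)$. For (3), apply (2) to $u_j=g_j/|g_j|$ and scale by $r_j=|g_j|$: Gaussian concentration plus a union bound place all $r_j\in[\tfrac12\sqrt n,\,2\sqrt n]$ except on an event of negligible probability (since the sample size $m$ is polynomial in $n$ under the hypotheses), converting a uniform-sphere margin of $c'\delta$ into a Gaussian margin of order $c'\delta\sqrt n$.

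The main obstacle is the single-sample lower bound $p\gtrsim\delta$: the margin constant $c'$ must be calibrated carefully, since the radial part of uniform measure on $\mathbb{S}^n$ concentrates near the origin of $V$, and $c'\delta$ must be small enough that the fat-separation wedge still captures a constant fraction of the separating lune's mass. Once this planar estimate is in place, the net and union-bound steps, as well as the Gaussian transfer, are all routine.
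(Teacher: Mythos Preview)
Your plan coincides with the paper's in its skeleton: pass to a $c\delta$-net $K_0$, establish a single-sample ``fat separation'' probability of order $\delta$ for each far-apart pair, apply a Chernoff/Bernstein bound and a union bound over $K_0\times K_0$, then deduce (3) from (2) via the polar decomposition $g_j=|g_j|\,\theta_j$ and concentration of $|g_j|$. The paper orders things differently (arguing (1), then (2), then (3)), but the substance is the same.

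The one genuine difference is the net-to-$K$ transfer. The paper introduces a geometric intermediary, \emph{transverse separation}: $\theta^\perp$ transversely separates $x,y$ if it crosses the geodesic from $x$ to $y$ in the middle half and at angle at least $\pi/4$. A somewhat involved spherical-geometry argument then shows that transverse separation of net points $x',y'$ forces ordinary sign-separation of nearby $x,y$; separately, transverse separation implies the margin $|\langle x,\theta\rangle|\gtrsim d(x,y)$, which gives (2). Your route bypasses all this: you work with the margin condition directly and transfer via the one-line bound $|\langle x-x',\theta\rangle|\le|x-x'|$. That is cleaner and avoids the geodesic geometry entirely; what it costs you is that (1) now rests on (2) rather than being established independently.

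You are right to flag the single-sample estimate $p\gtrsim\delta$ as the crux, and your concern is more serious than you may realize. The paper handles this step by asserting that, for $\theta$ uniform in the wedge $W_{x,y}$, the crossing point on the geodesic and the angle of intersection are uniform and independent, whence $\mathbb P(\widetilde W_{x,y})=\tfrac14\,d(x,y)$. But the angle of intersection is exactly $\arcsin|\theta_V|$, where $\theta_V$ is the projection of $\theta$ onto $V=\mathrm{span}(x,y)$---precisely the radial variable whose concentration near zero (of order $n^{-1/2}$) you worry about. So your obstacle is the paper's obstacle: a dimension-free margin constant $c$ in (2) and (3) cannot be obtained this way, and the planar calibration you describe will not repair it. Both arguments are genuinely delicate at this point.
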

%%%%%%%%%%%%%%%%%%%%%%%%%%%%%% THEOREM THEOREM THEOREM

Our proof is a modification of the standard `occupation time problem', which states  that it takes 
about $ n \log n$ independent uniform (0,1) observations to occupy all of the intervals 
$ [j/n, (j+1)/n)$, for $ 0 \leq j < n$.    The heuristic here is the following: \emph{the  small cells property 
 is governed by  metric entropy at scale $ \delta $.}

This heuristic  is quite useful in  the next  corollary, which improves the result of Plan and Vershynin \cite{MR3069959}*{Thm 2.1}, which proves the statement below with $ \delta ^{-3}$ replaced by $ \delta ^{-5}$.  
This  fact was basic to the main results of that paper concerning logistic regression in a compressed  one-bit setting.

%%%%%%%%%%%%%%%%%%%%%%%%%%%%%% COROLLARY COROLLARY COROLLARY
\begin{corollary}\label{c:} Let $ 0< s < n+1$ and $ 0< \delta < 1$ be given. Define a convex variant of the sparse vectors to be 
\begin{equation}\label{e:Kns}
K _{n,s} := \{  x \in \mathbb R ^{n+1} \;:\;  \lVert x\rVert_2 =1 , \lVert x\rVert_1 \leq s\}. 
\end{equation}
If $ m \gtrsim  \delta ^{ -3}s \log_+ \frac ns  $, then 
  the random vectors  $  \{ \theta _j\} _{j=1} ^{m}$  induce  a   tessellation  of $ K_{n,s}$ with $ \delta $-small cells.  
\end{corollary}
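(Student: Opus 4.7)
The strategy is to apply Theorem \ref{t:tessellation}(1) directly to $K = K_{n,s}$. That result requires $m > C\delta^{-1}\log N(K_{n,s}, c\delta)$, so the entire task reduces to establishing the metric entropy estimate
$$\log N(K_{n,s}, \epsilon) \;\lesssim\; \epsilon^{-2}\, s\, \log_+\!\frac{n}{s}.$$
Substituting $\epsilon = c\delta$ and multiplying by $C\delta^{-1}$ yields the claimed sample complexity $m \gtrsim \delta^{-3}\,s \log_+(n/s)$.

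Since $K_{n,s} \subset \mathbb{S}^n$, the geodesic distance $d$ is comparable to the Euclidean distance at small scales, so Euclidean covering numbers suffice. The first step would be to bound the Gaussian mean width
$$w(K_{n,s}) := \mathbb{E}\sup_{x \in K_{n,s}}\langle g, x\rangle,$$
where $g$ is a standard Gaussian on $\mathbb{R}^{n+1}$. The key idea is that elements of $K_{n,s}$ are \emph{effectively sparse}: the constraint $\|x\|_1 \le s$ together with $\|x\|_2 = 1$ forces rapid decay of the sorted coordinates, so $x$ is well approximated by its restriction to its $s$ largest coordinates, and the tail $\ell_2$-error is controlled. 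Splitting $\langle g, x\rangle$ into a top-$s$ contribution and a tail, and invoking the standard estimates on order statistics of i.i.d.\ Gaussians, delivers the sharp bound $w(K_{n,s})^{2} \lesssim s\,\log_+(n/s)$.

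The second step is to convert this Gaussian width bound into the entropy estimate via Sudakov's minoration,
$$\log N(K_{n,s}, \epsilon) \;\lesssim\; \frac{w(K_{n,s})^{2}}{\epsilon^{2}} \;\lesssim\; \frac{s\,\log_+(n/s)}{\epsilon^{2}},$$
which completes the argument when combined with Theorem \ref{t:tessellation}(1).

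The main obstacle is obtaining the $\log_+(n/s)$ factor rather than the crude $\log n$. A naïve estimate using $\sup_{x\in K_{n,s}}\langle g,x\rangle \le \sqrt{s}\,\|g\|_\infty$ and $\mathbb{E}\|g\|_\infty \lesssim \sqrt{\log n}$ would only give $w(K_{n,s})^2 \lesssim s\log n$, which yields a suboptimal $m \gtrsim \delta^{-3} s \log n$. The improvement to $\log_+(n/s)$ hinges on exploiting the coordinate decay forced by the $\ell_1$ constraint — a standard but essential compressed-sensing calculation. Once the effective-sparsity decomposition is in place, the remainder of the argument is an immediate application of Sudakov and of the already-established Theorem \ref{t:tessellation}.
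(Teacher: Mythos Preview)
Your proposal is correct and follows essentially the same route as the paper: apply Theorem~\ref{t:tessellation} to reduce to the entropy bound $\log N(K_{n,s},\epsilon)\lesssim \epsilon^{-2}s\log_+(n/s)$, and obtain that bound via Sudakov's minoration together with $\omega(K_{n,s})^2\lesssim s\log_+(n/s)$. The paper proves the width estimate by the same effective-sparsity block decomposition you describe (showing $K_{n,s}\subset 2\operatorname{conv}(K_s)$, hence $\omega(K_{n,s})\lesssim\omega(K_s)$) and then quoting the standard bound $\omega(K_s)^2\lesssim s\log_+(n/s)$.
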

%%%%%%%%%%%%%%%%%%%%%%%%%%%%%%  COROLLARY COROLLARY COROLLARY

We now turn to questions that are \emph{multi-scale in nature}, namely  the \emph{restricted isometry property}, 
which will be abbreviated to RIP \cite{MR2300700}.

%%%%%%%%%%%%%%%%%%%%%%%%%%%%%%  DEFINITION DEFINITION DEFINITION
\begin{definition}\label{d:iso} Let  $ (X,d_X) $ and $ (Y, d_Y)$ be metric spaces.  
We say that map $ \varphi \;:\; X \mapsto Y$  has the 
$\delta $-RIP if 
\begin{equation*}
\lvert  d _{X} (x,x') - d _{Y} (\varphi (x), \varphi (x'))\rvert < \delta , \qquad x,x' \in X.  
\end{equation*}
\end{definition}
%%%%%%%%%%%%%%%%%%%%%%%%%%%%%%  DEFINITION DEFINITION DEFINITION

In many situations, the dimension of $Y$ is much smaller than that of $X$, i.e. restricted isometries are a form of dimension reduction.  In this direction, considering \emph{linear} isometries, 
Klartag and Mendelson  \cite{MR2149924} have proved 

%%%%%%%%%%%%%%%%%%%%%%%%%%%%%% THEOREM THEOREM THEOREM
\begin{priorResults}\label{t:KM}[$ \delta $-RIP into $ \ell ^{1}$]  There is a constant $ C>0$ so that for all integers $ n$,  $ K\subset \mathbb S ^{n}$, 
and $ 0 < \delta <1$,  if $ m \geq  C \delta ^{-2} \omega (K)$, with probability at least $ 1 - \operatorname {exp}(- C \delta ^{-2} \omega  (K) ^2  )$, the linear map $ \frac 1m A \;:\; \ell ^{2}_n \mapsto \ell ^{1}_m$, where the rows 
of $ A$ are iid gaussian vectors,   has the $ \delta $-RIP.
\end{priorResults}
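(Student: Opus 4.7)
The plan is to reduce the RIP bound to a uniform deviation estimate for a centered process on the difference set $K - K$, and then apply generic chaining. Since $\langle g, u\rangle \sim \mathcal{N}(0, |u|_2^2)$, one has $\mathbb{E}|\langle g, u\rangle| = \sqrt{2/\pi}\,|u|_2$, so after absorbing the constant $\sqrt{\pi/2}$ into the normalization, the scaled $\ell^1$ norm $\tfrac{1}{m}|Au|_1$ has expectation $|u|_2$. The $\delta$-RIP claim then amounts to the uniform bound
\begin{equation*}
\sup_{u \in K - K}\left|\,\tfrac{1}{m}\,|Au|_1 - |u|_2\,\right| < \delta .
\end{equation*}

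First I would record pointwise concentration. For each fixed $u$, the map $A \mapsto \tfrac{1}{m}|Au|_1$ is Lipschitz (as a function of the Gaussian vector $A \in \mathbb{R}^{m(n+1)}$) with Lipschitz constant $|u|_2/\sqrt{m}$, by the reverse triangle inequality for $|\cdot|_1$. Gaussian concentration of Lipschitz functions yields sub-Gaussian tails of order $\exp(-mt^2/(2|u|_2^2))$. Applying the same reasoning to differences $u - v$, the centered process
\begin{equation*}
X_u := \tfrac{1}{m}\,|Au|_1 - |u|_2, \qquad u \in K - K,
\end{equation*}
has sub-Gaussian increments $\mathbb{P}(|X_u - X_v| > t) \leq 2\exp(-mt^2/(2|u-v|_2^2))$ with respect to the Euclidean metric on $K - K$.

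Next I would apply Talagrand's majorizing measure theorem to this sub-Gaussian process to obtain
\begin{equation*}
\mathbb{E}\sup_u\,|X_u| \lesssim \frac{\gamma_2(K-K,|\cdot|_2)}{\sqrt{m}} \lesssim \frac{\omega(K-K)}{\sqrt{m}} \lesssim \frac{\omega(K)}{\sqrt{m}},
\end{equation*}
using the Fernique--Talagrand equivalence between $\gamma_2$ and gaussian mean width for subsets of Euclidean space, together with the bound $\omega(K - K) \leq 2\omega(K)$. Finally, the supremum itself is a Lipschitz function of $A$, with constant $\operatorname{diam}_{\ell^2}(K - K)/\sqrt{m} \lesssim 1/\sqrt{m}$, so one further invocation of Gaussian concentration gives
\begin{equation*}
\mathbb{P}\bigl(\sup_u\,|X_u| > \mathbb{E}\sup_u\,|X_u| + \delta/2\bigr) \leq \exp(-c\,m\,\delta^2).
\end{equation*}
Choosing $m$ above the claimed threshold makes the expectation term at most $\delta/2$ and the residual probability at most $\exp(-c\,\omega(K)^2)$, giving the stated RIP with the asserted probability.

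The main obstacle is the chaining step. A naive Dudley entropy integral would lose a spurious $\sqrt{\log}$ factor and yield a bound in terms of the metric entropy rather than the gaussian mean width, so one must invoke Talagrand's generic chaining (or an equivalent majorizing measure construction) to recover the sharp $\omega(K)/\sqrt{m}$ scaling. The remaining pieces---identifying $|\cdot|_2$ as the correct chaining metric and bounding the Lipschitz constants of the relevant maps in $A$---are routine consequences of the Lipschitz nature of $|\cdot|_1$ together with $\mathrm{diam}_{\ell^2}(K - K) \leq 2$.
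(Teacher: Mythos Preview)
The paper does not give its own proof of this statement: it is quoted as a prior result of Klartag and Mendelson \cite{MR2149924} (hence the separate \texttt{priorResults} numbering as Theorem~A), and is used only for comparison with the one-bit results. So there is nothing in the paper to compare your argument against. Your outline is essentially the standard Klartag--Mendelson/Schechtman route: reduce to a uniform deviation on $K-K$, establish sub-gaussian increments in the Euclidean metric, apply generic chaining to get $\omega(K)/\sqrt m$, and finish with gaussian concentration for the supremum.

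There is, however, a genuine gap in your increment step. You obtain pointwise concentration of $A\mapsto \tfrac1m\lvert Au\rvert_1$ via its Lipschitz constant $\lvert u\rvert_2/\sqrt m$, and then write ``applying the same reasoning to differences $u-v$'' to conclude that $X_u-X_v$ is sub-gaussian with parameter $\lvert u-v\rvert_2/\sqrt m$. That does not follow: the map $A\mapsto \tfrac1m(\lvert Au\rvert_1-\lvert Av\rvert_1)$ is \emph{not} $\lvert u-v\rvert_2/\sqrt m$-Lipschitz in general. For a single row, the gradient of $g\mapsto \lvert\langle g,u\rangle\rvert-\lvert\langle g,v\rangle\rvert$ is $\operatorname{sgn}\langle g,u\rangle\,u-\operatorname{sgn}\langle g,v\rangle\,v$, whose norm can be as large as $\lvert u\rvert_2+\lvert v\rvert_2$ (take $u,v$ nearly parallel and $g$ on opposite sides of the two hyperplanes). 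So gaussian concentration only yields an increment parameter of order $(\lvert u\rvert_2+\lvert v\rvert_2)/\sqrt m$, which is useless for chaining.

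The correct route to the increment bound is the pointwise domination
\[
\bigl\lvert\,\lvert\langle g_j,u\rangle\rvert-\lvert\langle g_j,v\rangle\rvert\,\bigr\rvert \le \lvert\langle g_j,u-v\rangle\rvert,
\]
which shows each summand in $X_u-X_v$ is bounded by the absolute value of a one-dimensional gaussian of variance $\lvert u-v\rvert_2^2$, hence is sub-gaussian with that parameter; summing $m$ centered independent copies and dividing by $m$ then gives the desired $\lvert u-v\rvert_2/\sqrt m$ increment. With this fix in place, the rest of your argument (generic chaining, then Lipschitz concentration of the supremum with constant $\operatorname{diam}(K-K)/\sqrt m\le 2/\sqrt m$) goes through.
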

%%%%%%%%%%%%%%%%%%%%%%%%%%%%%% THEOREM THEOREM THEOREM

Above, $ \omega (K)$ is the gaussian mean width, defined by 
\begin{equation}\label{e:gmw}
\omega (K) = \mathbb E \sup _{x, y \in K} \langle x - y, \gamma  \rangle, 
\end{equation}
where $ \gamma $ is a standard normal on $ \mathbb R ^{n}$. 
The motivation here comes from standard examples. Observe that the gaussian mean width of the sphere is 
\begin{equation*}
\omega (\mathbb S ^{n}) = \mathbb E \lvert  \gamma \rvert  \simeq  \sqrt n. 
\end{equation*}
Moreover, for the $ s$-sparse vectors $ K_s$ as in \eqref{e:sparse},  one has $ \omega (K_s) \simeq \sqrt {s \log \frac ns}$. 
That is, in   many cases  $ \omega (K) ^2 $ represents an intrinsic notion of dimension.  

To compare the bounds in the two different theorems, first observe that the metric for the gaussian process 
$ \gamma _x := \langle x, \gamma  \rangle$ satisfies 
\begin{equation} \label{e:g_metric}
\lVert \gamma _x - \gamma _y \rVert_2 
= \bigl[ \mathbb E \langle x-y, \gamma  \rangle ^2  \bigr] ^{1/2} \simeq d (x,y).  
\end{equation}
Second,  use  Theorem~\ref{t:sudakov}, Sudakov's lower bound for the supremum of 
gaussian processes,    to see that  
\begin{equation*}
\delta ^{-1} \log N (K,  \delta ) = \delta ^{-3} [ \delta^2 \log N (K, \delta)] 
\lesssim \delta ^{-3} \omega (K) ^2 .  
\end{equation*}
See Schechtman \cite{MR2199631} 
for a detailed discussion of the theorem above, its relation to Dvoretsky Theorem, and remarks about optimal bounds.  

\medskip 

We turn to the subject of  the  one bit RIP.  
Namely, the construction of  $ \delta $-RIPs  from $ K \subset \mathbb S ^{n}$ into a  Hamming cube of dimension $ m$.  
Plan and Vershynin \cite{MR2149924}  proved that if $  m \gtrsim \delta ^{-6} \omega (K) ^2 $, 
than the one bit map \eqref{e:sgnA} satisfies  $ \delta $-RIP from the sphere into the Hamming cube, with 
high probability.  
They also  suggested that $ \omega (K) ^2 $ would also be the correct measure of the size of $ K$ for 
$ \delta $-isometries into the Hamming cube. 
We cannot verify this, but can prove an estimate of the form $ m \gtrsim \delta ^{-2} H (K) ^2 $, where 
$ H (K)$ is a different measure of the intrinsic dimension of $ K$.   In the case of sparse 
vectors,  we will see that $ H (K_s) \approx \omega (K_s) \approx \sqrt {s \log \frac ns}$.   

To set the stage for this new measure, define  $ H_x := \{\theta \in \mathbb S^{n} \;:\; \langle \theta ,x \rangle \ge 0\}$ 
to be  the positive hemisphere relative to $ x$.    
Observe that a hyperplane $\theta^\perp$  point  $ \theta  $ separates two points $ x, y \in \mathbb S^{n}$ 
if and only if $ \theta \in W _{x,y} := H_x \triangle H_y$ ($W$ stands for `wedge'). 
The symmetric difference between 
two hemispheres,  is related to the geodesic distance on the sphere through the essential equality 
$ \mathbb P (W _{x,y}) = d (x,y)$, which is a simple  instance of  the Crofton formula.   See Figure~\ref{f:W} and  \cite{MR0333995}*{p. 36-40}. 

Let us use randomly selected  points  $ \{ \theta _1 ,\dotsc, \theta _m\} \subset \mathbb S ^{n}$ 
to define the one bit map given by \eqref{e:sgnA}, namely 
\begin{equation*}
\varphi (x) = \{  \textup{sgn} (\langle x , \theta _j \rangle) \;:\; 1\le j \leq m\} \in \{-1, +1\} ^{m} . 
\end{equation*}
Restricting this map to  $ K\subset \mathbb S ^{n}$, the $ \delta $-RIP property becomes 
\begin{equation*}
\sup _{x,y\in K}   \Bigl\lvert\frac 1m  \sum_{j=1} ^{m}  \mathbf 1_{W _{x,y}}
(\theta _j) - \mathbb P (W _{x,y})\Bigr\rvert\leq \delta . 
\end{equation*}
That is, we need to bound, in non-asymptotic fashion, a standard empirical process over the class of wedges $ W _{x,y}$ defined by $ K$. 
Essential to such an endeavor is to understand the asymptotic  behavior of the associated empirical process.

%%%%%%%%%%%%%%% Figure
\begin{figure}
\tdplotsetmaincoords{65}{110}

\pgfmathsetmacro{\rvec}{1.2}

\pgfmathsetmacro{\thetavecc}{55}
\pgfmathsetmacro{\phivecc}{35}

\pgfmathsetmacro{\thetaveccc}{39.7}
\pgfmathsetmacro{\phiveccc}{55}

\begin{tikzpicture}[scale=2,tdplot_main_coords]

\shadedraw[tdplot_screen_coords,ball color = white] (0,0) circle (\rvec);

%-----------------------
\coordinate (O) at (0,0,0);

\tdplotsetcoord{B}{\rvec}{\thetavecc}{\phivecc}

\tdplotsetcoord{C}{\rvec}{\thetaveccc}{\phiveccc}

%draw the main coordinate system axes
\draw[thick,->] (0,0,0) -- (1.05,-.6,0) node[anchor=north east]{$x$};
\draw[thick,->] (0,0,0) -- (1.2, -0.2,0) node[anchor=north east]{$y$};
%\draw[thick,->] (0,0,0) -- (0,0,1.7) node[anchor=south]{$z$};

\draw (0,.5,0) node {$ W _{x,y}$}; 

%\draw[-stealth,very thick,color=blue] (O) -- (B);
%

%\draw[-stealth,very thick,color=green!60!black] (O) -- (C);

%\draw[dashed, color=blue] (O) -- (Bxy);
%\draw[dashed, color=blue] (B) -- (Bxy);
%\draw[dashed, color=green!60!black] (O) -- (Cxy);
%\draw[dashed, color=green!60!black] (C) -- (Cxy);

%\tdplotdrawarc[color=blue]{(O)}{0.3}{0}{\phivecc}{anchor=north}{$\lambda_A$}

\tdplotsetthetaplanecoords{\phivecc}

%\tdplotdrawarc[color=blue,tdplot_rotated_coords]{(0,0,0)}{0.3}{90}{\thetavecc}{anchor=south west}{$\varphi_A$}

%\tdplotdrawarc[color=green!40!black]{(O)}{0.7}{0}{\phiveccc}{anchor=north}{$\lambda_B$}

\tdplotsetthetaplanecoords{\phiveccc}

%\tdplotdrawarc[color=green!40!black,tdplot_rotated_coords]{(0,0,0)}{0.7}{90}{\thetaveccc}{anchor=south west}{$\varphi_B$}
\draw[dashed] (\rvec,0,0) arc (0:360:\rvec);
\draw[thick] (\rvec,0,0) arc (0:110:\rvec);
\draw[thick] (\rvec,0,0) arc (0:-70:\rvec);

\tdplotsetthetaplanecoords{35}
\draw[thick,tdplot_rotated_coords] (\rvec,0,0) arc (0:151:\rvec);
%\draw[very thick,color=red,tdplot_rotated_coords] (\rvec,0,0) arc (0:55:\rvec);
\draw[dashed,tdplot_rotated_coords] (\rvec,0,0) arc (180:-40:-\rvec);
\draw[thick,tdplot_rotated_coords] (\rvec,0,0) arc (360:336:\rvec);

\tdplotsetthetaplanecoords{55}
\draw[thick,tdplot_rotated_coords] (\rvec,0,0) arc (0:147:\rvec);
%\draw[very thick,color=red,tdplot_rotated_coords] (\rvec,0,0) arc (0:40:\rvec);
\draw[dashed,tdplot_rotated_coords] (\rvec,0,0) arc (180:-40:-\rvec);
\draw[thick,tdplot_rotated_coords] (\rvec,0,0) arc (360:334:\rvec);

\tdplotsetrotatedcoords{-79.1}{-120}{27.3}
%\draw[very thick,color=red,tdplot_rotated_coords] (\rvec,0,0) arc (0:21:\rvec);

\end{tikzpicture}

\caption{An illustration of the wedge $ W _{x,y} = H_x \triangle H_y$. }
\label{f:W}
\end{figure}
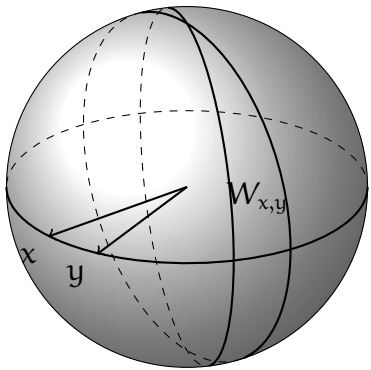
%%%%%%%%%%%%%%% Figure

We state the following theorem without proof.  It is an asymptotic result, while our emphasis is on 
non-asymptotic ones. However, it points to a distinguished role played by the gaussian process that appears 
in the conclusion.  We rely upon the well-known fact 
that a gaussian process is uniquely determined by its mean and covariance structure.  

%%%%%%%%%%%%%%%%%%%%%%%%%%%%%% THEOREM THEOREM THEOREM
\begin{theorem}\label{t:weak}  As $ m \to \infty $, the empirical process indexed by hemispheres converges weakly 
\begin{equation*}
\frac 1 {\sqrt m} \sum_{j=1} ^{m}  \mathbf 1_{H_x} (\theta _j) - \tfrac 12  \stackrel{d} \longrightarrow G_x 
\end{equation*}
where $ x$ ranges over $ \mathbb S ^{n}$, and $ G_x$ is the mean-zero gaussian process with 
\begin{equation}\label{e:hemi}
\mathbb E G_x ^2 = \tfrac 1 {4} , \qquad  (\mathbb E  (G_x -G_y) ^2) ^{1/2}  =  \mathbb P ( W_{x,y}) ^{1/2} = \sqrt{d (x,y)}. 
\end{equation}
\end{theorem}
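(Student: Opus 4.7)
The plan is to apply a standard functional central limit theorem (Donsker theorem) for empirical processes indexed by a VC class. Set $f_x := \mathbf{1}_{H_x} - \tfrac 12$ and $Z_m(x) := \frac{1}{\sqrt m}\sum_{j=1}^m f_x(\theta_j)$; the goal is weak convergence $Z_m \Rightarrow G_x$ in $\ell^{\infty}(\mathbb{S}^n)$, which factors into a finite-dimensional part and a tightness part.

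First I would identify the finite-dimensional distributions. For any tuple $x_1,\dots,x_k\in\mathbb{S}^n$, the vectors $(f_{x_1}(\theta_j),\dots,f_{x_k}(\theta_j))$ are iid, bounded, and mean zero, so the classical multivariate CLT delivers a centered Gaussian limit with covariance $\Sigma_{ij} = \mathbb{E}[f_{x_i}(\theta)f_{x_j}(\theta)]$. The antipodal symmetry of the uniform measure on $\mathbb{S}^n$ gives $\mathbb{P}(H_x\setminus H_y) = \mathbb{P}(H_y\setminus H_x) = \tfrac 12 \mathbb{P}(W_{x,y})$, which together with the Crofton identity $\mathbb{P}(W_{x,y}) = d(x,y)$ yields $\mathbb{P}(H_x\cap H_y) = \tfrac 12 - \tfrac 12 d(x,y)$. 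Hence
\[
\mathbb{E}[f_x(\theta)f_y(\theta)] = \mathbb{P}(H_x\cap H_y) - \tfrac 14 = \tfrac 14 - \tfrac 12 d(x,y),
\]
so in particular $\mathbb{E} G_x^2 = \tfrac 14$ and
\[
\mathbb{E}(G_x-G_y)^2 = \tfrac 12 - 2\bigl(\tfrac 14 - \tfrac 12 d(x,y)\bigr) = d(x,y),
\]
matching \eqref{e:hemi}. Since a mean-zero Gaussian process is determined by its covariance, this pins down the candidate limit.

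Second, I would establish tightness (stochastic equicontinuity) on $(\mathbb{S}^n,d)$. The class $\mathcal{H} := \{H_x : x\in\mathbb{S}^n\}$ is a subfamily of the closed half-spaces through the origin in $\mathbb{R}^{n+1}$, which is a VC class of finite VC dimension (at most $n+1$). Equivalently, the intrinsic $L^2(P)$ pseudo-metric on $\mathcal{F} := \{f_x\}$ satisfies $\|f_x - f_y\|_{L^2(P)} = \sqrt{\mathbb{P}(H_x \triangle H_y)} = \sqrt{d(x,y)}$, so the $L^2(P)$-covering numbers of $\mathcal{F}$ are dominated by the metric entropy of the sphere in the metric $d$, and thus grow polynomially in $\varepsilon^{-1}$. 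Dudley's entropy integral $\int_0^1 \sqrt{\log N(\mathcal{F},L^2(P),\varepsilon)}\,d\varepsilon$ therefore converges, so by Pollard's uniform CLT (see van der Vaart--Wellner) the class $\mathcal{F}$ is a Donsker class. Combined with the finite-dimensional convergence, this produces the claimed weak convergence in $\ell^{\infty}(\mathbb{S}^n)$, and the limiting process is automatically sample-continuous with respect to the intrinsic metric $\sqrt{d(x,y)}$.

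The main obstacle is the tightness step, since the finite-dimensional convergence and the covariance identification are essentially elementary computations. The key input there is the VC / entropy structure of the hemisphere class, which is standard but must be invoked carefully to pass from pointwise CLT to weak convergence of the full process.
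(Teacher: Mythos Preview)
Your argument is correct and follows the standard Donsker-class route: finite-dimensional convergence via the multivariate CLT, covariance identification, and tightness via the VC property of hemispheres. The covariance computation is clean and matches \eqref{e:hemi}, and the entropy/VC input you invoke is exactly what the paper records in Proposition~\ref{p:vch} and Lemma~\ref{l:VC-entropy}.

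Note, however, that the paper itself gives \emph{no} proof of this theorem: it is explicitly ``state[d] without proof,'' with the remark that the result is asymptotic while the paper's emphasis is on non-asymptotic bounds. So there is nothing to compare against; your outline simply fills in what the authors chose to omit, and does so with the natural tools (indeed the same VC machinery the paper later uses for its non-asymptotic estimates).
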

%%%%%%%%%%%%%%%%%%%%%%%%%%%%%% THEOREM THEOREM THEOREM

We call the gaussian process above the   \emph{hemisphere gaussian process}. It is  a key innovation of this paper. 
Particularly relevant in comparing our results with those stated in terms of 
gaussian mean width, note that the metric  for $ \gamma _x$ \eqref{e:g_metric} and that of $ G_x$ are related (essentially) through a square root.  

We then define the  \emph{hemisphere mean width of $ K\subset \mathbb S ^{n}$ } by 
\begin{equation} \label{e:Hdef}
H(K) := \mathbb E \sup _{x,y\in K}   \big( G_x - G_y \big)
\end{equation}
It always holds that $ \omega (K) \lesssim H (K) \lesssim \sqrt {n}$, and for the $ s$-sparse vectors $ K_s$, 
$ \omega (K) \simeq H (K) = \sqrt {s \log_+ \frac ns}$. 
We comment more on this measure   in \S \ref{s:H} below.

The next result concerns the bounds for the  one-bit $ \delta $-RIP.  It has two parts, one for general $ K\subset \mathbb S ^{n}$, 
and the other for the unit sparse vectors   defined  in \eqref{e:sparse}.

%%%%%%%%%%%%%%%%%%%%%%%%%%%%%% THEOREM THEOREM THEOREM
\begin{theorem}\label{t:H}[$ \delta $-RIP into Hamming Cube] 
There is a constant $ C>0$ so that for all dimensions $ n\in \mathbb N$,  subsets $ K\subset \mathbb S ^{n}$, 
and $ 0 < \delta <1$,  the following  two conclusions hold. 
%%  ENUMERATE
\begin{description}
\item [General $ K$] There exists a $ \delta $-RIP map from  $ (K, d) $  to the 
Hamming cube  $ ( \{-1, 1\} ^{m}, d _{H})$, provided   that $   m\geq C  \delta ^{-2}  H (K) ^2 $. 

\item [Sparse vectors] For $ 1\leq s \leq n+1$, with probability $ 1 - \operatorname {exp}\bigl(- s \log_+ \frac ns\bigr)$, the map $ x \mapsto  \textup{sgn} (A x)$ has $ \delta $-RIP from  $ (K_s, d) $  to the 
Hamming cube  $ ( \{-1, 1\} ^{m}, d _{H})$, provided   that $   m\geq C  \delta ^{-2} s \log_+ \frac ns   $.  
\end{description}
%% ENUMERATE
\end{theorem}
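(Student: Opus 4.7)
The plan is to reformulate the $\delta$-RIP condition as a uniform bound for an empirical process indexed by wedges $W_{x,y} = H_x \triangle H_y$, and then to control this empirical process by chaining against the hemisphere Gaussian process. First I would observe that $\textup{sgn}\langle\theta, x\rangle \neq \textup{sgn}\langle\theta, y\rangle$ if and only if $\theta \in W_{x,y}$, so that
\[
d_H(\textup{sgn}(Ax), \textup{sgn}(Ay)) = \frac{1}{m}\sum_{j=1}^m \mathbf{1}_{W_{x,y}}(\theta_j),
\]
while $\mathbb{E}\, d_H(\textup{sgn}(Ax), \textup{sgn}(Ay)) = \mathbb{P}(W_{x,y}) = d(x,y)$ by the Crofton formula. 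The $\delta$-RIP condition is therefore equivalent to the uniform empirical process bound
\[
Z := \sup_{x,y \in K}\left|\frac{1}{m}\sum_{j=1}^m \mathbf{1}_{W_{x,y}}(\theta_j) - d(x,y)\right| \leq \delta.
\]

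Next I would bound $\mathbb{E} Z$. By symmetrization followed by Dudley's entropy integral for bounded empirical processes,
\[
\mathbb{E} Z \lesssim \frac{1}{\sqrt m}\int_0^{\mathrm{diam}} \sqrt{\log N(\mathcal{W}, L^2(\mathbb{P}), \epsilon)}\,d\epsilon,
\]
where $\mathcal{W}$ is the class of indicators $\mathbf{1}_{W_{x,y}}$. The decisive step is an entropy comparison: from $W_{x,y} \triangle W_{x',y'} \subseteq (H_x \triangle H_{x'}) \cup (H_y \triangle H_{y'})$ we get
\[
\sqrt{\mathbb{P}(W_{x,y} \triangle W_{x',y'})} \leq \sqrt{d(x,x') + d(y,y')},
\]
which, up to a constant, is the intrinsic metric of the pair-process $G_x - G_y$ on $K \times K$, since $\mathbb{E}(G_x - G_y)^2 = d(x,y)$. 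Hence $N(\mathcal{W}, L^2, \epsilon) \leq N(K, \sqrt{d}, \epsilon/\sqrt{2})^2$, and Dudley's inequality applied to $G_x$ gives $\int \sqrt{\log N(\mathcal{W}, L^2, \epsilon)}\,d\epsilon \lesssim H(K)$. Consequently $\mathbb{E} Z \lesssim H(K)/\sqrt m$, which is at most $\delta/2$ once $m \geq CH(K)^2/\delta^2$. Since positivity of expectation guarantees a realization with $Z \leq \delta$, this gives the existence statement for general $K$.

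To obtain the high-probability conclusion for the sparse class $K_s$, I would apply Talagrand's inequality (Bousquet's form) for uniform deviations of a bounded empirical process. Since the indicators $\mathbf{1}_{W_{x,y}}$ have envelope $1$ and variance at most $1$, this yields $\mathbb{P}(Z > \mathbb{E} Z + t) \leq \exp(-cmt^2)$. Taking $t = \delta/2$ and using the bound $H(K_s) \lesssim \sqrt{s \log_+ (n/s)}$ asserted in the paper, the hypothesis $m \geq C\delta^{-2} s \log_+(n/s)$ simultaneously forces $\mathbb{E} Z \leq \delta/2$ (via the first part) and produces a failure probability of at most $\exp(-cCs\log_+(n/s)) \leq \exp(-s\log_+(n/s))$ once $C$ is taken large enough.

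The main obstacle is the entropy comparison: verifying that the $L^2(\mathbb{P})$ metric on wedges on $K \times K$ is genuinely dominated by the metric of the hemisphere Gaussian process, so that the chaining integral closes at $H(K)$ rather than at a coarser quantity like $\omega(K)$ (which would reproduce the weaker $\delta^{-6}\omega(K)^2$ bound of Plan--Vershynin). A secondary technical input is the estimate $H(K_s) \lesssim \sqrt{s\log_+(n/s)}$, which is needed to convert the abstract bound into the claimed $s \log_+(n/s)$ dependence.
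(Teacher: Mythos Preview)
Your argument has a genuine gap at the step ``Dudley's inequality applied to $G_x$ gives $\int \sqrt{\log N(\mathcal{W}, L^2, \epsilon)}\,d\epsilon \lesssim H(K)$.'' Dudley's inequality runs in the opposite direction: it gives $H(K) \lesssim \int_0^1 \sqrt{\log N(K,\sqrt{d},\epsilon)}\,d\epsilon$, not the reverse. In general the entropy integral can be strictly larger than the Gaussian width (Sudakov minoration is only a single-scale lower bound and does not integrate). So your route to $\mathbb E Z \lesssim H(K)/\sqrt m$ over all of $K$ does not close. Indeed, the paper's Remarks section explicitly states that proving the natural map $x\mapsto\textup{sgn}(Ax)$ is a $\delta$-RIP on all of $K$ when $m\gtrsim \delta^{-2}H(K)^2$ is an open conjecture; your argument would resolve it, which should signal that something is missing. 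There is also a subsidiary issue: after symmetrization, Dudley's bound is in the \emph{empirical} metric $L^2(P_m)$, not $L^2(\mathbb P)$, so one cannot pass directly to the population entropy without either a uniform-entropy hypothesis or a separate comparison of the two metrics.

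The paper handles the general case differently. It first passes to a finite $\delta/4$-net $K_0\subset K$ and precomposes with the nearest-point projection $\pi_0:K\to K_0$ (a $\delta/2$-RIP). On the finite set $K_0$ it proves, via Bernstein and a union bound (Lemma~\ref{l:D}), that the empirical wedge metric $D(x,y)^2=\tfrac1m\sum_j\mathbf 1_{W_{x,y}}(\theta_j)$ is within a factor $2$ of the hemisphere metric $\mathbb P(W_{x,y})$. On that event the symmetrized process $Z_{x,y}$ is sub-Gaussian with increments dominated by those of $G_x-G_y$, and the comparison principle (generic chaining plus the majorizing measure theorem, not Dudley) yields $\mathbb E\sup_{K_0}|Z_{x,y}|\lesssim H(K)$. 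The resulting $\delta$-RIP on $K$ is the composition $\varphi_0\circ\pi_0$, not the raw sign map, which is why the theorem for general $K$ asserts only existence. For the sparse case the paper does \emph{not} reuse the general argument at all: it invokes the VC structure of the wedge class $\mathcal W_s$ and Panchenko's uniform-entropy inequality (Theorem~\ref{t:W} with $\eta=1$) to bound the full empirical process on $K_s$ directly, obtaining $\sqrt m\cdot Z \lesssim \sqrt{s\log_+(n/s)}$ with the stated probability. Your concentration step via Talagrand/McDiarmid is fine, but it needs a valid bound on $\mathbb E Z$ over all of $K_s$; the VC route supplies one, your Dudley-to-$H(K)$ step does not.
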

%%%%%%%%%%%%%%%%%%%%%%%%%%%%%% THEOREM THEOREM THEOREM

Notice that for general $ K$, we do not assert that $ x \mapsto  \textup{sgn} (A x)$ is the  isometry, 
whereas Plan and Vershynin    \cite{MR3164174}*{Thm 1.2} show that it is, but with  $ m \gtrsim \delta ^{-6} \omega (K) ^2 $. 
We comment more on this in \S 3.    
But, note that in the sparse vector case, we only require  $ m$ to be as big as in the 
linear $ \delta $-RIP. That is we have this surprising conclusion. 
\begin{quote}
\emph{For sparse vectors   the one-bit map is just as effective as the linear map.}
\end{quote}
We note that \cite{13051786} very nearly proves this result for sparse vectors, missing by a factor of $ \log 1/ \delta $, 
and using a proof that is more involved than ours.  
Also, the Jacques \cite{150406170} considers quantized maps from sets $ K\subset \mathbb R ^{n}$ into $ \mathbb Z ^{m}$, 
using gaussian mean width. It seems plausible that the hemisphere process is relevant to that paper as well.

This corollary, in which $ K$ is a finite set,  is immediate, and is a one bit analog of the Johnson-Lindenstrauss lemma.

%%%%%%%%%%%%%%%%%%%%%%%%%%%%%% COROLLARY COROLLARY COROLLARY
\begin{corollary}\label{c:embeds} 
If $ K\subset \mathbb S ^{n}$ is finite, then    there is a $\delta $-isometry 
from  $ (K, d) $ into  the Hamming cube $  ( \{-1, 1\} ^{m}, d _{H})$, where $ m \lesssim \delta ^{-2} \log \lvert  K\rvert  $.   
\end{corollary}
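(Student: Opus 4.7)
The plan is to reduce this corollary to the General~$K$ part of Theorem~\ref{t:H} by upper bounding the hemisphere mean width $H(K)$ when $K$ is finite. Once we show $H(K)^2 \lesssim \log \lvert K\rvert$, the corollary follows immediately from the hypothesis $m \geq C\delta^{-2} H(K)^2$ in that theorem.

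To bound $H(K)$, I would invoke the definition \eqref{e:Hdef} together with the covariance structure \eqref{e:hemi} of the hemisphere gaussian process. The random variable $G_x - G_y$ is a centered gaussian with variance $\mathbb{E}(G_x - G_y)^2 = d(x,y) \leq 1$, since geodesic distance is normalized to take values in $[0,1]$. The supremum in the definition of $H(K)$ is therefore a maximum of at most $\lvert K\rvert^2$ centered gaussians, each with variance at most $1$.

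Applying the standard gaussian maximal inequality (for $N$ centered gaussians of variance at most $\sigma^2$, the expected maximum is at most $\sigma\sqrt{2\log N}$) yields
\begin{equation*}
H(K) = \mathbb{E}\sup_{x,y\in K}(G_x - G_y) \leq \sqrt{2\log \lvert K\rvert^2} = 2\sqrt{\log \lvert K\rvert}.
\end{equation*}
Hence $H(K)^2 \lesssim \log\lvert K\rvert$. Plugging this into Theorem~\ref{t:H} gives the claimed bound $m \lesssim \delta^{-2}\log\lvert K\rvert$.

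There is essentially no obstacle: the only step with any content is the observation that, for finite $K$, the intrinsic complexity measured by $H(K)$ reduces to a finite gaussian maximum, at which point the combinatorial $\sqrt{\log \lvert K\rvert}$ factor emerges automatically. This mirrors exactly how the classical Johnson--Lindenstrauss lemma for linear maps falls out of Theorem~\ref{t:KM} once one bounds the gaussian mean width $\omega(K)$ by $\sqrt{\log\lvert K\rvert}$, which is why the authors describe the result as the one-bit analog of Johnson--Lindenstrauss.
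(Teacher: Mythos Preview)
Your proposal is correct and is precisely what the paper intends: the authors state that the corollary ``is immediate'' from Theorem~\ref{t:H}, and the only content beyond that theorem is the bound $H(K)^2 \lesssim \log\lvert K\rvert$, which you supply via the standard maximal inequality for finitely many centered gaussians of variance at most one (cf.\ Lemma~\ref{l:sup} in the paper).
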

%%%%%%%%%%%%%%%%%%%%%%%%%%%%%%  COROLLARY COROLLARY COROLLARY

Recall that the Johnson-Lindenstrauss lemma states that a  finite set  $ K\subset \mathbb R ^{n}$ 
has a linear Lipschitz embedding into $ \mathbb R ^{m}$, where $ m \gtrsim  \delta ^{-2} \log \lvert  K\rvert $. 
Namely, for linear $ A \;:\; \mathbb R ^{n} \mapsto \mathbb R ^{m}$, 
\begin{equation*}
\bigl\lvert \lVert Ax - A x'\rVert_2 - \lVert x-x'\rVert_2\bigr\rvert\leq \delta \lVert x-x'\rVert_2. 
\end{equation*}
Here, $ A$ is   $ \frac 1m$ times a standard $ m \times n$ gaussian matrix, the inequality holding with high probability.    
Embeddings of $ \ell ^2 $ into $ \ell ^{1}$ are of significant interest (\cite{MR2755868} and references therein), from the perspective of the best possible bounds, as well as implementations with few random inputs.

Another relevant property is \emph{sign-product RIP} property.  
This we specialize to the case of sparse vectors, and also add modest restrictions on $ s$ and $ \delta $ in order 
get a value of $ m$ that matches those of the other Theorems.   Namely, we require a weak lower bound on $ \delta $, 
as a function of $ s/n$.  

%%%%%%%%%%%%%%%%%%%%%%%%%%%%%% THEOREM THEOREM THEOREM
\begin{theorem}\label{t:sign_product}  
   There is a constant C so that 
for all integers $ 0 < s \leq n $,  and $0 < \delta < 1 $ such that 
\begin{equation} \label{e:assume}
 ( s/ 10n) ^{4}< \delta   
\end{equation}
with probability at least $ 1 - (\frac s {2n}) ^{2s}$, 
for all $  m   \simeq   C \delta ^{-2} s \log_+ \frac ns $, there holds 
\begin{equation} \label{e:sign_product}
\sup _{x, y \in K_s}  
\frac 1m \Bigl\lvert 
\sum_{j=1} ^{m}   \textup{sgn} (\langle x, g_j \rangle) \langle y,g_j \rangle - \lambda \langle x,y \rangle 
\Bigr\rvert \leq \delta 
\end{equation}
where the $ g_j$ are iid standard gaussians on $ \mathbb R ^{n+1}$, and $ \lambda = \sqrt {\frac 2 \pi }$.  
\end{theorem}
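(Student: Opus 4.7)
The plan is to prove the sign-product RIP by a net argument on $K_s\times K_s$ combined with pointwise sub-Gaussian concentration, using Theorem~\ref{t:tessellation}(3) (and its anti-concentration companion) to control the fluctuations caused by the discontinuity of the sign. First I fix $(x,y)\in K_s\times K_s$ and decompose $y = \langle x,y\rangle x + z$ with $z\perp x$. Rotational invariance of the Gaussian makes $\textup{sgn}(\langle x, g_j\rangle)$ independent of $\langle z, g_j\rangle$, yielding the mean $\mathbb{E}\,\textup{sgn}(\langle x,g_j\rangle)\langle y, g_j\rangle = \langle x,y\rangle\,\mathbb{E}|\langle x, g_j\rangle| = \lambda\langle x,y\rangle$. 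Each summand is sub-Gaussian with variance proxy $\lesssim 1$, so a sub-Gaussian Bernstein estimate gives
\begin{equation*}
\mathbb{P}\Big(\Big|\tfrac{1}{m}\sum_j\textup{sgn}(\langle x,g_j\rangle)\langle y,g_j\rangle - \lambda\langle x,y\rangle\Big| > \delta/4\Big) \leq 2\exp(-cm\delta^{2}).
\end{equation*}

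Next I build an $\epsilon$-net $\mathcal{N}$ of $K_s$, with $\epsilon$ a small polynomial in $\delta$, by unioning an $\epsilon$-net over each $s$-dimensional coordinate sphere; this gives $\log|\mathcal{N}| \lesssim s\log_+(n/s) + s\log(1/\epsilon)$. Assumption~\eqref{e:assume} ensures $\log(1/\delta)\lesssim \log_+(n/s)$, so $\log|\mathcal{N}|\lesssim s\log_+(n/s)$. For $m\gtrsim \delta^{-2}s\log_+(n/s)$, a union bound over the $|\mathcal{N}|^{2}$ pairs delivers the uniform estimate on $\mathcal{N}\times\mathcal{N}$ with probability at least $1-(s/2n)^{2s}$.

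The difficult step is passing from $\mathcal{N}$ back to $K_s$. For $(x,y)\in K_s\times K_s$ and nearest $(\tilde x,\tilde y)\in\mathcal{N}\times\mathcal{N}$, the error splits as
\begin{equation*}
\frac{1}{m}\sum_{j}\textup{sgn}(\langle x,g_j\rangle)\langle y-\tilde y,g_j\rangle + \frac{1}{m}\sum_{j}[\textup{sgn}(\langle x,g_j\rangle)-\textup{sgn}(\langle \tilde x,g_j\rangle)]\langle \tilde y,g_j\rangle,
\end{equation*}
plus a continuous $O(\epsilon)$ correction from the $\lambda$-term. The first sum is an average of signed Gaussians tested against a vector of norm $\leq\epsilon$, which sub-Gaussian concentration bounds by $\lesssim \epsilon\sqrt{s\log_+(n/s)}$. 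The second sum is supported on the indices $j$ at which $g_j^{\perp}$ separates $x$ from $\tilde x$, forcing $|\langle \tilde x,g_j\rangle|\leq |\langle x-\tilde x,g_j\rangle|\lesssim \epsilon\sqrt{\log m}$. The separation estimate of Theorem~\ref{t:tessellation}(3), combined with a standard anti-concentration / Bernstein argument applied uniformly over $\mathcal{N}$, shows that the fraction of such $j$ is $\lesssim \epsilon$, and on each such $j$ one has $|\langle \tilde y,g_j\rangle|\lesssim \sqrt{s\log n}$. Choosing $\epsilon$ a suitable power of $\delta$ then makes both contributions $\leq \delta/2$.

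The principal obstacle is precisely this last step: the sign function's discontinuity precludes a naive Lipschitz bound on the net error, and the essential technical input is the uniform small-hyperplane-gap control behind Theorem~\ref{t:tessellation}(3), which converts a worst-case $O(1)$ sign-flip count into a controlled $O(\epsilon)$ fraction. Hypothesis~\eqref{e:assume} is used twice: first to absorb $\log(1/\delta)$ into $\log_+(n/s)$ when bounding $\log|\mathcal{N}|$, and second to ensure that the extraneous $\log m$ and $\log n$ factors arising in the passage from $\mathcal{N}$ to $K_s$ are dominated by the main term $s\log_+(n/s)$.
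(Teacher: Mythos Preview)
Your high-level plan—pointwise concentration on a net, then control of the sign-flip error when passing back to $K_s$—mirrors the paper's three-stage proof. But two steps do not close as written.

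First, Theorem~\ref{t:tessellation}(3) is the wrong tool: it gives a \emph{lower} bound on separating hyperplanes for points that are \emph{far apart}, whereas you need an \emph{upper} bound on the number of $j$ with $\textup{sgn}\langle x,g_j\rangle\neq\textup{sgn}\langle\tilde x,g_j\rangle$, uniformly over \emph{all} $x\in K_s$ near $\tilde x$. Your inclusion $\{j:\text{flip}\}\subset\{j:|\langle\tilde x,g_j\rangle|\le|\langle x-\tilde x,g_j\rangle|\}$ is correct, but to remove the dependence on $x\notin\mathcal N$ you must replace the right side by $\sup_{\|v\|\le\epsilon,\ v\ 2s\text{-sparse}}|\langle v,g_j\rangle|\asymp\epsilon\sqrt{s\log_+(n/s)}$, not $\epsilon\sqrt{\log m}$. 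Hence the anti-concentration argument over $\mathcal N$ only yields a flip fraction $\lesssim\epsilon\sqrt{s\log_+(n/s)}$, not $\lesssim\epsilon$. The paper obtains the needed uniform bound by invoking the VC/uniform-entropy inequality for wedges (Theorem~\ref{t:W}), which controls $\sup_{d(x,x')<\eta}\sum_j\mathbf 1_{W_{x,x'}}(\theta_j)$ directly over all of $K_s$.

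Second, even granting a flip fraction $p$, your bound on the second sum is $p\cdot\max_{\tilde y,j}|\langle\tilde y,g_j\rangle|\lesssim p\sqrt{s\log_+(n/s)}$, and this is too crude. Take $s=n/2$: then $s\log_+(n/s)\asymp n$ and \eqref{e:assume} allows $\delta$ to be a fixed constant, so you would need $p\lesssim 1/\sqrt n$, forcing $\epsilon$ so small that $\log|\mathcal N|\gtrsim s\log(1/\epsilon)\gtrsim n\log n$, which destroys the union bound (your budget is $m\delta^2\asymp n$). The paper's remedy is to \emph{symmetrize} first—making $\varepsilon_j\,\textup{sgn}\langle x,g_j\rangle\langle y,g_j\rangle$ genuinely Gaussian in $y$—and then, instead of a pointwise bound, to control $\sup_{|J|\le u}\sup_{y\in K_s}\bigl|\tfrac1{\sqrt m}\sum_{j\in J}\varepsilon_j\langle y,g_j\rangle\bigr|$ as a Gaussian process with metric $\sqrt{u/m}\,\|\cdot\|$, followed by a union bound over the $\binom{m}{u}$ subsets $J$. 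This exploits cancellation among the $\langle y,g_j\rangle$ and is what makes the estimate go through for all $s\le n$.
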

%%%%%%%%%%%%%%%%%%%%%%%%%%%%%% THEOREM THEOREM THEOREM

This RIP property arises from \cite{13051786}, and a quantitative estimate $ m \gtrsim \delta ^{-6} \omega (K) ^2 $ 
is proved in  \cite{MR3008160}*{Prop. 4.3}  for general $ K$.  Furthermore, this latter bound is heavily used in 
\cite{14078246}*{\S4} in the sparse vector case.  
Above we have seemingly optimal lower bounds on $ m$, subject to mild restrictions on  $ \delta $.

\bigskip

One can ask what is the best value of $ m = m (K, \delta )$ that can achieve the bounds in these different 
estimates, from small cells to the restricted isometry properties.   The estimates of Plan and Vershynin \cite{MR3164174} were in terms of the 
gaussian mean width. 
  Using Sudakov's lower bound, for the supremum of 
gaussian processes,  Theorem \ref{t:sudakov}, note that 
\begin{equation} \label{e:compare}
\sqrt {\log N (K, \delta )} \lesssim 
\begin{cases}
 \delta ^{-1} \omega (K) 
 \\
 \delta ^{-1/2} H (K) 
\end{cases}
\end{equation}
with the difference in the right hand side being a consequence in the difference of a square root in the metrics 
of the two gaussian processes.  Indeed, $N(K, d_G, \delta )  = N (K,\delta^2)$,  where $d_G$ is the metric of the hemisphere process. Therefore, 
\begin{equation*}
\delta ^{-1} \log N (K,  \delta )  \lesssim 
\begin{cases}
\delta ^{-3} \omega (K) ^2 
\\
 \delta ^{-2}H (K) ^2 
\end{cases}
\end{equation*}
The left hand side is the bound for the $ \delta $-small cell property. 
The lower part of the right hand side is the  bound required for the $ \delta $-RIP into the Hamming cube.  
It would appear that $ \delta ^{-3}$ is the smallest multiple of $ \omega (K) ^{2}$ that could appear in such theorems, 
and, for instance,  the  bounds  of
Plan and Vershynin \cite{MR3164174} were of the form  $ \delta ^{-6} \omega (K ) ^2 $.

\bigskip 

The  small cells Theorem~\ref{t:tessellation}, proved in \S~\ref{s:tess},  will be seen as a consequence of a standard occupation time calculation. 
The RIP Theorem~\ref{t:H} is a calculation involving  empirical processes, after reducing a general $ K $ to a finite 
approximating set.   Specializing $ K$ to sparse vectors leads naturally to the notion of VC dimension. 
This is detailed in \S~\ref{s:H}, with the sign-product RIP Theorem~\ref{t:sign_product} proved in \S~\ref{s:sign}. 
Powerful inequalities for empirical processes for VC sets simplify the analysis of the these theorems. The background information on empirical processes
and the properties we need are collected  in \S~\ref{s:prob}.

\begin{ack}
This work was completed as part of a the program in High Dimensional Approximation,  Fall 2014, at 
ICERM, at Brown University.  We thank Simon Foucart for asking us about the sign-product RIP.  
\end{ack}

%%%%%%%%%%%%%%%%%%%%%%%%%%%%%% SECTION  SECTION SECTION
%%%%%%%%%%%%%%%%%%%%%%%%%%%%%% SECTION  SECTION SECTION 
\section{Small Cells: Proof of Theorem~\ref{t:tessellation}} \label{s:tess}
We introduce  the notion of a hyperplane \emph{transversely separating} vectors $ x,y \in \mathbb S ^{n}$. 
Assuming $ x$ and $ y$ are not antipodal, there is a unique geodesic  $ \tau $  from $ x$ to $ y$. 
Any hyperplane  $ H$ that separates $ x$ and $ y$ must intersect $ \tau $.  We further say that \emph{$ H$ transversely separates $ x$ and $ y$} if  (a) the angle  of intersection 
between $ \tau $ and $ H$  is at least $ \pi /4$, and (b) that the distance of the point of intersection $ \tau \cap H$ 
to both $ x$ and $ y$ be at  least $ d (x,y) /4$.  The heuristic motivation for this definition is that if a hyperplane transversely separates $x$ and $y$, it also has to separate point lying close to $x$ and $y$, which allows one to pass to a finite subset.  Let 
\begin{equation}\label{e:transversal}
\widetilde W _{x,y} := \{  \theta \in W _{x,y} \;:\;   \textup{$ \theta ^{\perp}$  transversely separates $ x$ and $ y$} \}. 
\end{equation}
Now, for a randomly selected $ \theta  \in W _{x,y}$, 
the angle of intersection between $ \theta ^{\perp}$ and  $ \tau $ is uniformly distributed on the circle, as is the point of intersection $ \theta ^{\perp} \cap \tau $ on $ \tau $.  
Moreover, these two quantities are statistically independent.  From this, it follows that 
\begin{equation*}
\mathbb P (\widetilde W _{x,y})  =  \tfrac 14  d (x,y). 
\end{equation*}

%%%%%%%%%%%%%%% Figure
\begin{figure}
\begin{tikzpicture}
\filldraw (0,0) circle (.05cm) node[below] {$ x$};
\filldraw (3,0) circle (.05cm) node[below] {$ y$};
\draw[thick] (-1,.7) -- (3.5, -1) node[right] {$  \theta ^{\perp}$}; 
\draw[dashed] (-1.25, -.3) -- (3.5,.5) node[right] {$\tilde  \theta ^{\perp}$};  
%\draw (0,0) node[below] {$ x$} sin (1.57,1) node[below] {$ y$}; 
%\draw (0 , 1 ) -- (2, 0 ) node[right] {$ H$}; 
\end{tikzpicture}

\caption{The points $ x$ and $ y$ are transversely separated by $  \theta ^{\perp}$, but not $ \tilde  \theta ^{\perp}$.}
\label{f:trans}
\end{figure}
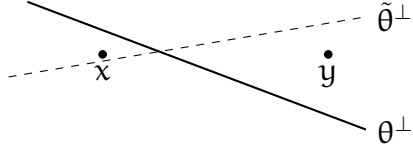
%%%%%%%%%%%%%%% Figure

The main line of argument begins with  selection of 
 $ K _{0} \subset K$,   a maximal subset such that each pair of distinct points $ x, y \in K_0$ 
satisfies $ d (x,y) \ge \delta /64$.   Then, as is well known, see \eqref{e:capacity}, $ \lvert   K_0\rvert  \le N (K, \delta /128)$. 

\smallskip 
Observe the following: if  $ \mathcal H$  is a collection of hyperplanes for which
each pair of distinct  elements of $ K _{0}$ is transversely separated  by a  hyperplane in $ \mathcal H$, 
then   $ \mathcal H$  induces a tessellation with  $ \delta $-small cells on $ K$. 
Indeed,  for two points $ x,y \in K$, separated by $ \delta $, let $ \tau $ be the geodesic 
between $ x$ and $ y$, and then, take  $ x' , y'\in K _0$  to be the points closest to $ x$,  and $ y$, respectively. 
So, $ d (x,x'), d (y,y')\leq \tfrac 1 {64} \delta $, and $ d (x',y') \ge \frac {31} {32} \delta $.   Let $ \tau '$ be the 
geodesic between $ x'$ and $ y'$.  

Now, $ x'$ and $ y'$ are transversely separated by  a hyperplane $ H \in \mathcal H$, by assumption.  
Then,  (i) the point $ z' = H \cap \tau '$ is at distance at most $ \delta /8 $  from $ \tau $, 
(ii) $ z'$  is at least  at distance $ \frac 1 5 d (x,y)$ from both $ x$ and $ y$, 
and hence (iii) $ H$ separates $ x$ and $ y$, and moreover, the point $ z = \tau  \cap H $ satisfies $ d (z,x) \ge \frac 1 {20} d (x,y)$, 
and similarly for $ y$.  

Indeed, to see (i) the lengths of $ \tau $ and $ \tau '$  very close. 
\begin{equation*}
\lvert  d (x,y) - d (x',y')\rvert \le \tfrac 1{32} \delta .%= \tfrac 1 {32} \frac {\delta } {d (x,y)} \cdot d (x,y).
\end{equation*}
Let $ \tau ''$ be the geodesic obtained by a rigid motion of $ \tau '$ so that $ \tau ''$ has $ x$ as an endpoint. 
Then, parameterizing $ \tau $ and $ \tau ''$ by arc length $ s$,  and identifying $ \tau (0) = \tau '' (0)=x$, 
the quantity $ d (\tau (s), \tau '' (s))$ is monotone increasing.  
But, at $ s= d (x,y)$, we will have 
\begin{align*} 
 \textup{dist} (z', \tau ) &\leq   \tfrac 1 {64} \delta +   d (\tau (s), \tau '' (s)) 
\\ &=  \tfrac 1 {64} \delta+ d (y, \tau '' (s))  
\leq  \tfrac 1 {64} \delta + d (y, y') + d (y', \tau '' (s)) 
\\
&\leq \tfrac 1 {32} \delta  + d (y', \tau '' (d (x',y')))  + \lvert  d (x',y') - d (x,y)\rvert \leq \tfrac 1 { 8} \delta .   
\end{align*}
To see (ii), just use the triangle inequality to get a better result.  
\begin{equation*}
d (z', x) \ge d (z', x') - d (x',x) \ge \tfrac 1 4  d (x',y') - \tfrac 1 {64} \delta \ge \tfrac {29} {128} d (x,y). 
\end{equation*}
And,   (iii) would be obvious on the plane. We are however on the sphere,  
but with spherical triangles of small diameter.   The spherical corrections are 
small, so the result will follow. 

\smallskip 

We can now argue for the first conclusion of the Theorem, concerning small cells. 
Take $ \mathcal H = \{  \theta  _  \ell ^{\perp} \;:\;  1\le \ell \leq m \}  $ to be  iid uniform random hyperplanes,  
where $ m \ge C \delta ^{-1} \log N (K, \delta /128)$. 
For  a pair of points $ x , y \in K_0$,   $x\neq y$, the chance that no $ \theta _ \ell $ transversely separates  
$ x$ and $ y$ is, for some fixed $ 0< c < 1$, 
\begin{align*}
\mathbb P ( \theta _{\ell } \not\in \widetilde W_ {x,y}, \  1\leq \ell \leq m) &\leq (1 -  c\delta ) ^{m} 
\\&\leq \operatorname {exp}(  -c'\delta m) \leq (2 \cdot  N (K , \delta /128)) ^{-4}, 
\end{align*}
with appropriate choice of $ C$. Now, there are at most $  N (K , \delta /128) ^2 $ pairs of points $  x , y \in K_0$, $x\neq y$,
hence, by the union bound, the probability that $ K_0$ is not  transversely separated by $ \mathcal H$ is 
at most  $ (2 \cdot  N (K , \delta /128)) ^{-2} $. So the proof  of the first conclusion of  Theorem \ref{t:tessellation} is complete.  

\medskip 

For the second conclusion of the Theorem,  begin by observing that if 
$ \theta ^{\perp}$ transversely separates $ x$ and $ y$, then, 
we have necessarily  
\begin{equation} 
\langle x, \theta  \rangle < - c_0 d (x,y) < c_0 d (x,y) < \langle y, \theta  \rangle, 
\end{equation}
where above, we can take $ c_0 = \frac 1 {10}$.  
Above, we assume as we may that $ \langle x, \theta  \rangle< 0$.   

We will then prove the conclusion of the Theorem for $ K_0$.  Namely if 
 if $ m > C \delta ^{-1} \log N (K, c \delta )$, 
with probability at least   $ 1 -  [ 2N (K, c\delta )]  ^{-2} $, 
for any pair of points $ x, y\in K$, with $ \lvert  x-y\rvert > \delta  $, 
there are at least $ c' \delta m$  choices of $ j$ such that 
\begin{equation} \label{e:0far}
\langle x,  \theta _j \rangle < - c_0 \delta < c_0 \delta < \langle y, \theta_j   \rangle.  
\end{equation}
We can then draw the same conclusion for all of $ K$, provided we replace $ c_0$ above by $ \frac 1 {20}$, 
by   property (ii) above.   

Now, $ p _{x,y}=\mathbb P (\theta \in \widetilde W _{x,y}) > \frac 1 {256}\delta $,   so for sufficiently small $ c$, 
\begin{align*}
\mathbb P \Bigl(\sum_{j=1} ^{m}   \mathbf 1_{\theta _j \in \tilde W _{x,y}} < c \delta m  \Bigr) 
&\leq \mathbb P \Bigl(\sum_{j=1} ^{m}   \mathbf 1_{\theta _j \in \tilde W _{x,y}}  - p _{x,y}    <  - \tfrac c2  \delta m  \Bigr)  
\\
& \lesssim \operatorname {exp}\Bigl( - \frac { c ^2 \delta ^2 m ^2 } { c' \delta  m }\Bigr) 
\lesssim \operatorname {exp}( -  C \log N (K, \delta /64))  
\end{align*}
by choice of $ m$.  We can make $ C$ large, so that a simple union bound shows that with probability at least 
$ 1 - (2 N (K, \delta /64)) ^{-4}$, for each pair of $ x,  y \in K_0$, $x\neq y$, there are at least $ c \delta m $ choices of $ j$ for 
which $ \theta _j$ satisfies \eqref{e:0far}.  The proof is complete. 
\medskip 

For the third conclusion of the Theorem, we want to make the same conclusion as in the the second part, 
but for gaussian r.v.s.  But, we know the result for $ \theta _j$ chosen at random on the sphere.  
And, when making a gaussian observation   $ g$  on $ \mathbb R ^{n+1}$, we say that it is \emph{moderate} if $  \tfrac   {\sqrt {n+1}}2 \le \lvert  g\rvert \leq 2 \sqrt {n+1}  $. 
Note that 
\begin{equation*}
\mathbb P ( \tfrac  {\sqrt {n+1}}2 \le \lvert  g\rvert \leq 2 \sqrt {n+1} ) = 
\mathbb P \Bigl( -\frac {3(n+1)} 4  \leq \sum_{\ell =1} ^{n+1} g_ \ell ^2 -1   \leq 3 (n+1)   \Bigr) 
\end{equation*}
where the $ g _{\ell }$ are one dimensional standard gaussian r.v.s.  It follows that 
the probability of $ g$ being moderate tends to one, as $ n $ tends to $ \infty $.  

Now, for iid gaussians $ \{g_j\}$ taking values in $ \mathbb R ^{n+1}$, set $ \theta _j  = g_j / \lvert  g_j\rvert $. 
If $ \theta _j $ satisfies \eqref{e:0far}, and $ g_j$ is moderate, then $ g_j/ \sqrt {n+1}$ satisfies \eqref{e:0far}, with different 
constants.  
The event that $ g_j$ being moderate, and the distribution of $ \theta _j$ are statistically independent, 
hence we can condition on the $ \{g_j\}$ being moderate, and apply the second conclusion, to deduce our result.

%%%%%%%%%%%%%%%%%%%%%%%%%%%%%% SUBSECTION SUBSECTION SUBSECTION SUBSECTION
 %%%%%%%%%%%%%%%%%%%%%%%%%%%%%% SUBSECTION SUBSECTION SUBSECTION SUBSECTION 
\subsection{Proof of Corollary~\ref{c:}}%\label{ss.}

The next lemma improves upon \cite{MR3069959}*{Lemma 3.4} and, together with Theorem~\ref{t:tessellation}, clearly proves 
the Corollary. 

%%%%%%%%%%%%%%%%%%%%%%%%%%%%%% LEMMA LEMMA LEMMA
\begin{lemma}\label{l:} The set $ K _{n,s}$ defined in \eqref{e:Kns} satisfies the metric entropy bound 
\begin{equation*}   
\delta ^2  \log N (K _{n,s}, \delta ) \lesssim  s   \log_+ n/s. 
\end{equation*}
\end{lemma}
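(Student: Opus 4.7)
The plan is to reduce everything to a bound on the gaussian mean width $\omega(K_{n,s})$ and then to control that width by a classical convex-relaxation argument. First, Sudakov's minoration (Theorem~\ref{t:sudakov}) applied to the gaussian process $\gamma_x := \langle x,\gamma\rangle$ on $\mathbb S^{n}$, whose intrinsic metric is equivalent to the geodesic distance $d$ by \eqref{e:g_metric}, yields
\[
\delta\,\sqrt{\log N(K_{n,s}, \delta)} \;\lesssim\; \omega(K_{n,s}).
\]
Squaring, it therefore suffices, uniformly in $\delta$, to prove the $\delta$-free bound
\[
\omega(K_{n,s}) \;\lesssim\; \sqrt{s\log_+(n/s)}.
\]

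The geometric heart of the matter is the standard convex-relaxation containment
\[
K_{n,s} \;\subset\; B_2^{n+1}\cap\sqrt{s}\,B_1^{n+1} \;\subset\; C\cdot\mathrm{conv}(K_{Cs}),
\]
for an absolute constant $C$. This is classical in compressed sensing and admits a short proof by Maurey's empirical method: for $x$ with $\|x\|_2\le 1$ and $\|x\|_1\le\sqrt{s}$, write $x/\|x\|_1$ as a convex combination of signed standard basis vectors $\pm e_j$, then sample $N = O(s)$ independent draws; the empirical average is $O(s)$-sparse, has $\ell^2$-norm at most $\|x\|_1\lesssim\sqrt{s}$, and approximates $x$ in $\ell^2$ to within an arbitrarily small constant. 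Rescaling and taking closure yields the containment. Since the gaussian width is invariant under taking (symmetric) convex hulls and monotone under set inclusion, this gives $\omega(K_{n,s})\lesssim \omega(K_{Cs})$.

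It remains to bound $\omega(K_{Cs})\lesssim\sqrt{s\log_+(n/s)}$, which is a union bound: on a fixed support $S$ of size $Cs$, $\|\gamma_S\|_2^2$ is a $\chi^2_{Cs}$ variable with the standard subexponential tail, and a union bound over the $\binom{n+1}{Cs}\le(e(n+1)/(Cs))^{Cs}$ possible supports, combined with $\omega(K_{Cs})^2 \le \mathbb E\max_{|S|=Cs}\|\gamma_S\|_2^2$, produces the claimed estimate.

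The main obstacle is the Maurey step: one must keep the ambient sparsity of the approximating vector at $O(s)$, \emph{independent of $\delta$}, because any $\delta$-dependence in the sparsity parameter would degrade the final bound to $s\log n$ rather than the sharper $s\log_+(n/s)$. This careful sparsity bookkeeping is precisely the sharpening over the weaker estimate in \cite{MR3069959}*{Lemma 3.4}.
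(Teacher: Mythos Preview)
Your overall strategy matches the paper's exactly: Sudakov's minoration reduces the problem to bounding $\omega(K_{n,s})$, which is then controlled via a containment $K_{n,s}\subset C\,\operatorname{conv}(K_{Cs})$, the identity $\omega(\operatorname{conv}K)=\omega(K)$ (which, as you note, is elementary for a linear functional---the paper actually invokes Talagrand's majorizing measure here, unnecessarily), and the standard bound $\omega(K_s)^2\lesssim s\log_+(n/s)$.

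The gap is in your Maurey step. The empirical method, with $N=Cs$ samples, produces for each $x\in K_{n,s}$ an $N$-sparse vector $z$ with $\|z\|_2\lesssim 1$ and $\|x-z\|_2\lesssim 1/\sqrt{C}$; but this is an \emph{approximation}, not a containment. The residual $e=x-z$ lives only in $\epsilon B_2^{n+1}$, and when you pass to widths this contributes an additive $\epsilon\,\omega(B_2^{n+1})\simeq\epsilon\sqrt{n}$, which swamps the target $\sqrt{s\log_+(n/s)}$. Sending $\epsilon\to 0$ forces $N\to\infty$, so ``taking closure'' cannot recover the containment with sparsity $O(s)$. The paper obtains the exact inclusion $K_{n,s}\subset 2\,\operatorname{conv}(K_s)$ via the standard block decomposition: sort the coordinates of $x$ by magnitude into consecutive blocks $x_1,x_2,\dots$ of size $s$, observe $\|x_j\|_2\le \|x_{j-1}\|_1/\sqrt{s}$ for $j\ge 2$, and sum to get $\sum_j\|x_j\|_2\le 1+\|x\|_1/\sqrt{s}\le 2$. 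This is the ``classical compressed sensing'' fact you invoke; it just is not proved by Maurey. Substitute that argument and your proof is complete and coincides with the paper's.
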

%%%%%%%%%%%%%%%%%%%%%%%%%%%%%% LEMMA LEMMA LEMMA

%%%%%%%%%%%%%%%%%%%%%%%%%%%%%% PROOF PROOF PROOF
\begin{proof}
To estimate this quantity, we use Sudakov's bound in the form of the upper estimate in \eqref{e:compare}. 
Thus, we have 
\begin{equation*}
\delta ^2  \log N (K _{n,s} , \delta ) \lesssim  \omega  (K _{n,s}) ^2 . 
\end{equation*}
 
Therefore, it suffices to show  that $ \omega  (K_{n,s}) ^2 \lesssim s \log_+ \frac ns $, which is a well-known fact. 

This follows from: 
(A)  the convexity result  $ K _{n,s} \subset 2 \operatorname{convex}(K_s)$, where the latter is the 
convex hull of $ K_s$;  (B) the further convexity estimate 
$ \omega (\operatorname{convex} (K)) \lesssim \omega (K)$; and (C) $ \omega  (K_s) ^2 \lesssim s \log_+ \frac ns$.

These three points are as follows. The point (A) 
 is seen by a common technique in compressed sensing.  
Given $ x \in K _{n,s}$, write it as $ x= \sum_{j} x_j$, where $ x_1$ is $ x$ restricted to the $ s$-largest 
coefficients, $ x_2$ has the next $ s$-largest coefficients, and so on.  It suffices to show that 
\begin{equation*}
\sum_{j} \lVert x_j\rVert_2 \leq 2. 
\end{equation*}
Now, $ \lVert x_1\rVert_2 \leq 1$.  And, for $ j >1$, each coordinate of $ x _{j}$ is dominated by the 
least coordinate of $ x _{j}$, which is at most  $ \lVert x _{j-1}\rVert_1/s$.  
Hence $ \lVert x_j\rVert_2 \leq \lVert x _{j-1}\rVert_1/\sqrt s $. But, we have a bound on  $\lVert x\rVert_1 $, hence 
\begin{align*}
\sum_{j=1} ^{\infty }  \lVert x_j\rVert_2 &\leq \lVert x_1\rVert_2 +\sum_{j=2} ^{\infty }  \lVert x_j\rVert_2
\\
& \leq 1 + \frac 1 {\sqrt s}  \sum_{j=1} ^{\infty }  \lVert x_j\rVert_1 \leq 2. 
\end{align*}

Point (B) is a deep implication of Talagrand's majorizing measure theorem.  And (C) is a well-known fact, 
see for instance \cite{MR3008160}*{Lemma 2.3}. 

\end{proof}
%%%%%%%%%%%%%%%%%%%%%%%%%%%%%% PROOF PROOF PROOF

%%%%%%%%%%%%%%%%%%%%%%%%%%%%%% REMARK REMARK REMARK
\begin{remark}\label{r:delta}   
We have chosen the proof above for convenience.  
The gaussian mean width is used, namely the   upper half of the Sudakov estimate \eqref{e:compare}, so that we 
could appeal to Talagrand's convexity inequality. (With the hemisphere process, 
the convexity argument would be more complicated.)
Thus, the power on $ \delta $ we get is $ \delta ^{-3}$, which as \eqref{e:compare} suggests, is optimal for 
this strategy. 
Potentially, there is an additional 
improvement in the power of $ \delta $, but we do not pursue it here. 

\end{remark}
%%%%%%%%%%%%%%%%%%%%%%%%%%%%%% REMARK REMARK REMARK

%%%%%%%%%%%%%%%%%%%%%%%%%%%%%% SECTION  SECTION SECTION
%%%%%%%%%%%%%%%%%%%%%%%%%%%%%% SECTION  SECTION SECTION 
\section{$ \delta $-RIP into the  Hamming Cube: Proof of Theorem~\ref{t:H}} \label{s:H}

%%%%%%%%%%%%%%%%%%%%%%%%%%%%%% SUBSECTION SUBSECTION SUBSECTION SUBSECTION
 %%%%%%%%%%%%%%%%%%%%%%%%%%%%%% SUBSECTION SUBSECTION SUBSECTION SUBSECTION 
\subsection{The Case of General $ K\subset \mathbb S ^{n}$}%\label{ss.}
The distinction between the case of general $ K$ and sparse $ K_s$ is that 
in the general case, we avoid making an entirely specific choice of RIP.  
Let $ K_0$ be a maximal cardinality subset of $ K$ so that  
for all $ x\neq y \in K_0$,  one has $ d (x,y) >  \delta/4$. 
Of course $ \lvert  K_0\rvert \leq  N (K, \delta/8 ) $, see \eqref{e:capacity}.  
Letting $ \pi _0 \;:\; K \mapsto K_0$ be the map sending $ x\in K$ to the element of $ K_0$ closest to $ x$, 
one then has $ d (x, \pi _0 x) \le \delta/4  $.  That is $ \pi _0 \;:\; K \mapsto K_0$ has  $ \delta /2$-RIP.

We construct a  one-bit map  $ \varphi _0 \;:\; K_ {0} \mapsto \{ -1,1\} ^{m}$, which is also a $ \delta /2$-RIP map, i.e. 
\begin{equation}\label{e:Ks}
\sup _{x,y\in K _{0}}  \bigl\lvert   d_H( \varphi_0 (x) , \varphi_0 (y))  - d (x,y)\bigr\rvert \leq \delta/2. 
\end{equation}
Observe that if we extend $ \varphi _0$ to $ K$ by the formula $ \varphi (x) := \varphi _0 (\pi _{s_0} x)$, 
we will have then proved  Theorem \ref{t:H}, since the composition of a $ \delta _1$-RIP  and a $ \delta _2$-RIP  is a $ \delta _1 + \delta _2$-RIP. 

The mapping $ \varphi _0$ is natural map  $ x \mapsto \textup{sgn} (A x)$, as in \eqref{e:sgnA}, but restricted to $ K_0$.  
Note that  for $ x,y\in K_0$ the Hamming distance is 
\begin{align}
d _{H} ( x,y) 
& = \frac 1 m 
\sum_{\ell =1} ^{m} \mathbf 1 \{  \textup{ $ \theta  _{\ell } ^{\perp} $ separates $ x$ and $ y$ }\} 
\\  \label{e:2Empirical}
& = \frac 1 m \sum_{\ell =1} ^{m} \mathbf 1 _{W _{x,y}} (\theta _{\ell }).  
\end{align}
In expectation, this is the geodesic distance:  $\mathbb E d_H (x,y) =  d (x,y)$.   We will show that 
\begin{equation}  \label{e:dHK}
\mathbb E \sup _{ x, y \in K_0}  \lvert d _{H} ( x,y)  - d (x,y) \rvert \leq   H (K)/\sqrt m 
\end{equation}
where $ H (K) $ is defined in \eqref{e:Hdef}.  Since $ m \geq C \delta ^{-2} H (K) ^2 $, and by 
 the general large deviation result for empirical processes Lemma~\ref{l:martingale}, we can then 
conclude our claim in high probability. In fact, appealing to concentration of measure again, we do not directly estimate the expectation  above, but rather 
show that a sufficiently small quantile of the r.v.\ is dominated by $ C  H (K)/\sqrt m$, hence the expectation is as well.

To bring the hemisphere gaussian process into play,  define 
\begin{equation}  \label{e:Z}
Z _{x,y}  := \frac 1 {\sqrt m} \sum_{k=1} ^{m}  \varepsilon  _{j}\mathbf 1_{W _{x,y}} (\theta _j)
\end{equation}
where $ \{\varepsilon _j\}$ is an iid sequence of rademacher random variables.  We will show that if $ m \gtrsim \delta ^{-2} H(K) $, 
\begin{equation}\label{e:2show}
\mathbb E \sup _{x,y\in K_0} \lvert  Z _{x,y} \rvert \lesssim H(K_0)  \leq H (K).   
\end{equation}
This is sufficient, since we can apply the symmetrization estimate \eqref{e:rad}, 
\begin{align} \label{e:Zz}
\mathbb E \sup _{x,y\in K} \Bigl\lvert  \frac 1 m \sum_{k=1} ^{m} \mathbf 1_{W _{x,y}} (\theta _j) - d (x,y) \Bigr\rvert  
\lesssim  \frac 1 {\sqrt m}\mathbb E \sup _{x,y\in K} \lvert  Z _{x,y}\rvert \lesssim H(K)/\sqrt m \lesssim \delta ,  
\end{align}
where the last quantity follows from our assumption on $ m$.  

\medskip 

Holding the $ \theta _j$ fixed, associated to the process $ Z _{x,y}$ is the conditional metric 
\begin{equation*}
D (x,y ) ^2  := \frac 1 m  \sum_{j=1} ^{m} \mathbf 1_{W _{x,y}}(\theta _j) . 
\end{equation*}
It remains to show that it is  very close to the metric for the hemisphere gaussian process.

%%%%%%%%%%%%%%%%%%%%%%%%%%%%%% LEMMA LEMMA LEMMA
\begin{lemma}\label{l:D} For any $ \epsilon >0$, there is  a constant $ C_ \epsilon $ sufficiently large, so that for all   $  m \geq C_ \epsilon  \delta ^{-2} H(K)$,  with probability at $ 1- \epsilon $, there holds 
\begin{equation*}
\sup _{x,y\in K_0}     \frac {\lvert  D (x,y) ^2 - \mathbb P (W _{x,y})\rvert } { \mathbb P (W _{x,y})} \leq  1 
\end{equation*}
\end{lemma}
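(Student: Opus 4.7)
The plan is to observe that on $K_0$ we have the uniform lower bound $\mathbb P(W_{x,y}) = d(x,y) > \delta/4$, since $K_0$ was chosen to be $\delta/4$-separated in the construction preceding the lemma. Consequently, the relative-error inequality claimed in the lemma is implied by the \emph{absolute} deviation estimate
\[
\sup_{x,y \in K_0} \bigl|D(x,y)^2 - \mathbb P(W_{x,y})\bigr| \leq \tfrac{\delta}{4},
\]
which is precisely the kind of uniform empirical-process bound already set up in \eqref{e:Zz}.

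For the expectation of this supremum, I would apply the symmetrization inequality \eqref{e:rad} to pass from the centered empirical process to the Rademacher process $Z_{x,y}$ defined in \eqref{e:Z}, and then invoke the key estimate \eqref{e:2show} to obtain
\[
\mathbb E \sup_{x,y \in K_0} \bigl|D(x,y)^2 - \mathbb P(W_{x,y})\bigr| \;\lesssim\; \frac{1}{\sqrt m}\,\mathbb E \sup_{x,y \in K_0} |Z_{x,y}| \;\lesssim\; \frac{H(K)}{\sqrt m}.
\]
Under the hypothesis on $m$ (read as $m \gtrsim \delta^{-2} H(K)^2$, in line with the bounds used in \eqref{e:Zz} and elsewhere in this section), the right side is at most any preassigned small multiple of $\delta$.

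To upgrade the expectation bound to a statement that holds with probability $\geq 1-\epsilon$, I would invoke Talagrand's concentration inequality for suprema of bounded empirical processes, collected in \S\ref{s:prob}: the indicator class $\{\mathbf 1_{W_{x,y}}\}$ is uniformly bounded by $1$ and has envelope variance at most $1$, so the supremum concentrates about its mean with a sub-gaussian fluctuation on scale $1/\sqrt m$ and a sub-exponential fluctuation on scale $1/m$. Taking $C_\epsilon$ sufficiently large forces the combined fluctuation below $\delta/4$, and completes the argument.

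The main obstacle is that the target inequality is \emph{relative}, not absolute: without the lower bound $\mathbb P(W_{x,y}) > \delta/4$ afforded by the $\delta/4$-separation of $K_0$, pairs $x,y$ with $d(x,y)\ll\delta$ would demand a truly multiplicative concentration (a Bennett/Bernstein sharpening with variance proxy $\mathbb P(W_{x,y})$) that neither symmetrization nor the generic Talagrand inequality delivers directly. Restricting to $K_0$ is therefore essential, and once done, the analysis reuses precisely the machinery already assembled for the proof of \eqref{e:Zz}.
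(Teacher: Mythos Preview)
Your argument is circular. You invoke \eqref{e:2show} to bound $\mathbb E\sup_{x,y\in K_0}|Z_{x,y}|$ by $H(K)$, but in the paper's logical structure Lemma~\ref{l:D} is precisely the tool used to \emph{prove} \eqref{e:2show}: once $D(x,y)^2\le 2\,\mathbb P(W_{x,y})$ with high probability, the conditional Rademacher process $Z_{x,y}$ becomes sub-gaussian with metric dominated by that of the hemisphere process, and only then does the bound $\mathbb E\sup|Z_{x,y}|\lesssim H(K)$ follow. So you cannot feed \eqref{e:2show} back into the proof of the lemma.

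The paper sidesteps this by exploiting what you correctly identified but then did not use directly: $K_0$ is \emph{finite}, with $|K_0|\le N(K,\delta/8)$. For each fixed pair $x,y\in K_0$ one applies Bernstein's inequality to the sum of i.i.d.\ indicators $\mathbf 1_{W_{x,y}}(\theta_j)$ with mean $p=d(x,y)>\delta/4$, giving
\[
\mathbb P\bigl(|D(x,y)^2-p|>p\bigr)\lesssim \exp(-mp/4)\le \exp(-c\,m\delta).
\]
Since $m\delta\gtrsim \delta^{-1}H(K)^2\gtrsim \log N(K,\delta/8)$ by Sudakov's lower bound for the hemisphere process (cf.\ \eqref{e:compare}), a union bound over the at most $N(K,\delta/8)^2$ pairs finishes the job. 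No symmetrization, no appeal to \eqref{e:2show}, and no Talagrand concentration is needed: the pointwise Bernstein bound plus finiteness of $K_0$ plus Sudakov is the whole argument.
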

%%%%%%%%%%%%%%%%%%%%%%%%%%%%%% LEMMA LEMMA LEMMA

Thus, on a set of large measure the conditional sub-gaussian process given by $ Z _{x,y}$ has a metric 
dominated by twice  that of the metric for the hemisphere gaussian process. Thus, for any $ 0<\epsilon <1 $, there is 
a $ C_ \epsilon >0$ so that  
\begin{equation*}
\mathbb P \bigl( \sup _{x,y \in K_0} \lvert  Z_x - Z_y\rvert \geq C_ \epsilon  H (K) \bigr) \leq 2 \epsilon . 
\end{equation*}
It follows from the deviation inequality \eqref{e:martingale} and standard concentration of measure estimates 
for gaussian processes that \eqref{e:2show} holds, completing the proof.

We now turn to the proof of Lemma \ref{l:D}.
%%%%%%%%%%%%%%%%%%%%%%%%%%%%%% PROOF PROOF PROOF
\begin{proof}
Fix $ x,y\in K_0$ and let $ p = \mathbb P  (W _{x,y}) = d(x,y)$. It is a consequence of Bernstein's inequality that 
\begin{align*}
\mathbb P ( \lvert  D (x,y) ^2 - \mathbb P (W _{x,y})\rvert  >  p ) 
& \leq C_0
\operatorname {exp}
\Bigl( - \frac { p ^2 } { p/m + 3 p/m } \Bigr) \leq 
C_0 \operatorname {exp}( - mp/4) . 
\end{align*}
By construction of $K_0$, $p = d (x,y) > \delta/4$. Hence  $ mp \gtrsim m \delta > C \delta ^{-1} H (K) \ge C' \log N (K, \delta /8)$, 
with the last inequality following from Sudakov's lower bound for the supremum of gaussian processes, see Theorem~\ref{t:sudakov} as well as the discussion of \eqref{e:compare}.
The constant $ C'$ can be made as large as desired, and there are at most $ N (K, \delta /8) ^2 $ pairs 
of $ x,y \in K_0$, so the proof is immediate from the union bound.  

\end{proof}
%%%%%%%%%%%%%%%%%%%%%%%%%%%%%% PROOF PROOF PROOF

%%%%%%%%%%%%%%%%%%%%%%%%%%%%%% SUBSECTION SUBSECTION SUBSECTION SUBSECTION
 %%%%%%%%%%%%%%%%%%%%%%%%%%%%%% SUBSECTION SUBSECTION SUBSECTION SUBSECTION 
\subsection{Remarks}%\label{ss.}
Plan and Vershynin \cite{MR3164174} proved analogs of the results above using the 
 gaussian process $ \gamma _x = \langle x, \gamma  \rangle$, which has a metric that differs from 
 that of the hemisphere process $ G_x$ by essentially a square root,  compare \eqref{e:g_metric} and \eqref{e:hemi}.  
Recall that the set $ K_0$ is a $ c\delta $ packing in $ K$, relative to the geodesic metric $ d (x ,y)$.  
 This allows a direct comparison of $ \omega (K_0)$ and $ H (K_0)$, namely 
$
H (K_0) \lesssim \delta ^{-1/2} \omega (K_0) 
$, 
which follows  from Talagrand's majorizing measure theorem.  
The key inequality  \eqref{e:2show}  proved about $ K_0$ shows that if $ m \gtrsim \delta ^{-3} \omega (K) ^2  \gtrsim \delta ^{-2} H (K_0) $,  
then  there is a $ \delta $-RIP map from $ K$ to the Hamming cube of dimension $ m$.  
And, when restricted to $ K_0$, this is the map $ x \mapsto \textup{sgn} (Ax)$, where the rows of $ A$ 
are the iid $ \theta _j$.  

This has the disadvantage of not making the RIP explicit on all of $ K$.  Plan-Vershynin \cite{MR3164174} show 
more, at the cost of  additional powers of $ \delta ^{-1} $.  
If $ m \gtrsim \delta ^{-6} \omega (K) ^2 $, then the map $ x \mapsto \textup{sgn} (Ax)$, \emph{on all of $ K$},  
is a $ \delta $-RIP, with high probability.  

It is reasonable to conjecture that the hemisphere constant $ H (K)$ is the correct quantity governing the 
RIP property into the Hamming cube. Namely, that 
if $ m \gtrsim \delta ^{-2} H (K) ^2 $, then with high probability, the map $ x \mapsto \textup{sgn} (Ax)$ 
is a $ \delta $-RIP map from $ K$ into the $ m$-dimensional Hamming cube.  
Verifying this conjecture seems to  
bump up against subtle questions about empirical processes indexed by hemispheres, and attendant issues 
related to concentration of measure in the sphere.  That is why we resort to a small ambiguity about exactly 
what the RIP is.  These issues do not arise in the sparse vector case, however, as is argued below. 

%%%%%%%%%%%%%%%%%%%%%%%%%%%%%% SUBSECTION SUBSECTION SUBSECTION SUBSECTION
 %%%%%%%%%%%%%%%%%%%%%%%%%%%%%% SUBSECTION SUBSECTION SUBSECTION SUBSECTION 
\subsection{The Case of Sparse Vectors in Theorem~\ref{t:H}}%\label{ss.}

We turn to    the  case of  sparse vectors, where we can give a   proof 
that the natural map $ x \mapsto \textup{sgn} (Ax)$ is a $ \delta $-isometry of the 
set  $ K= K _{s}$, for integer $ 0 < s \leq n$ into the Hamming cube. 
The very short proof is based on observations about VC classes and empirical processes 
that are collected  in the concluding section of the paper. 

Observe that for $ x,y\in K_s$, we have the empirical process identity 
\begin{equation*}
\lvert  d _{H} (x,y) - d (x,y)\rvert 
= 
\Bigl\lvert  \frac 1m \sum_{j=1} ^{m}   \mathbf 1_{W _{x,y}} (\theta _j) - d (x,y) \Bigr\rvert . 
\end{equation*}
Apply the empirical process inequality \eqref{e:W} with $ \eta =1$, and $ u= C\sqrt {s \log_+ \frac ns}$.  
We conclude that the bound below holds with 
probability at least $ 1 - \operatorname {exp}( - C s \log_+ \frac ns)$.  
\begin{equation*}
\sup_{x,y  \in K _{s}}  
\Bigl\lvert  \frac 1 {\sqrt m} 
\sum_{j=1} ^{m}  \big(  \mathbf 1_{W _{x,y}} (\theta _j) - d (x,y)  \big) \Bigr\rvert 
\lesssim \sqrt { s \log_+ \frac ns } 
\end{equation*}
With the  normalization by $ \sqrt m$ above,  our condition $ m \gtrsim \delta ^{-2} s \log_+ \frac ns $ clearly gives the desired conclusion.

%%%%%%%%%%%%%%%%%%%%%%%%%%%%%% SECTION  SECTION SECTION
%%%%%%%%%%%%%%%%%%%%%%%%%%%%%% SECTION  SECTION SECTION 
\section{Sign-Product Embedding Property} \label{s:sign}

We turn to the analysis of Theorem~\ref{t:sign_product}.  For a standard gaussian $ g$ on $ \mathbb R ^{n}$ 
and $ \lambda = \sqrt {\tfrac 2 \pi }$,  
we argue that   this inequality holds  for $ x, y \in \mathbb S ^{n}$. 
\begin{equation*}
\mathbb E\,  \textup{sgn} (\langle x,g \rangle) \langle y, g \rangle = \lambda  \langle x,y \rangle. 
\end{equation*}
Now, if $ \langle x,y \rangle=0$, then $ \langle x,g \rangle$ and $ \langle y,g \rangle$ are independent, 
verifying the equality above.  And, by linearity in expectation and  $ y$, we can then reduce to the case of $ x=y$. 
But then, the random variable above is the absolute value of $ Z$, a standard gaussian on $ \mathbb R $.  
And $ \mathbb E \lvert  Z\rvert= \lambda $.  
Thus, the bound in \eqref{e:sign_product} fits within the empirical process framework.

We will provide a proof that with probability at least $  1 - (\frac s {2n}) ^{2s}$, we have 
\begin{equation}  \label{e:SP<}
\sup _{x,y\in K_s} 
\biggl\lvert \frac 1 {\sqrt m }\sum_{j=1} ^{m}  \textup{sgn} (\langle x, g_j \rangle) \langle y, g_j \rangle - \lambda 
\langle x,y \rangle\biggr\rvert 
\leq C \sqrt {s  \log_+ \frac ns  }  ,
\end{equation}
where $  K_s$ is the collection of $ s$-sparse  vectors in $ \mathbb S ^{n}$.     
For each fixed $ x, y\in K_s$, we certainly have 
\begin{equation*}
\mathbb P \Bigl(\biggl\lvert \frac 1 {\sqrt m }\sum_{j=1} ^{m}  \textup{sgn} (\langle x, g_j \rangle) \langle y, g_j \rangle - \lambda 
\langle x,y \rangle\biggr\rvert > 2  \Bigr) \leq \frac 12 . 
\end{equation*}
It follows that the symmetrization inequality Lemma~\ref{l:anderson} holds.  Namely is suffices to show that 
with probability at least $  1 -\tfrac 14 (\frac s {2n}) ^{2s}$, we have 
\begin{equation}\label{e:SPP<}
\sup _{x,y\in K_s} 
\sup _{x,y\in K_s} 
\biggl\lvert \underbrace {\frac 1 {\sqrt m } \sum_{j=1} ^{m}  \varepsilon _j\textup{sgn} (\langle x, g_j \rangle) \langle y, g_j \rangle} _{:= Z (x,y)} \biggr\rvert
\leq C \sqrt {s  \log_+ \frac ns  }  . 
\end{equation}
Above, we take $ \varepsilon _j$ to be an independent set of rademacher r.v.s.

To ease notations, call the sum above $ Z(x,y)$,   set $ \overline x_j = \textup{sgn} (\langle x, g_j \rangle) $, 
and $ \overline y_j = \langle y,g_j \rangle$.  
It is essential to observe that $ \varepsilon_j \overline x_j \overline y_j$
is distributed like a one dimensional mean zero gaussian, of variance $ \lVert y\rVert ^2 $. 
(On the other hand $ \varepsilon_j (\overline x_j - \overline  x '_j)\overline y_j$ is not gaussian, since it will 
 equal zero if $ g_j/\lVert g_j\rVert \not\in W _{x, x'}$.)

The proof of \eqref{e:SPP<} then combines three ingredients.  
(1)  For fixed $ x\in K_s$, the r.v. $ \sup _{y\in K_s} \lvert  Z (x,y)\rvert $ 
is controlled, and has concentration of measure. We can, essentially for free, form a supremum over 
$ x$ in net $ X\subset K_s$ of small diameter.  
(2) As $ x', x'' \in K_s$ vary over all pairs of vectors that are close, the selector random variables 
$  \overline  x'_j -\overline  x''_j \in\{-1, 0, 1\} $ pick out a set of small cardinality.  
And (3) the supremum over the sum of the $ \overline y_j$ over sets of small cardinality is controlled.  

\smallskip 

The details are as follows.  
For fixed $ x$,  the process $ \{Z (x,y) \;:\; y\in K_s\}$ is a gaussian process, with mean 
given in terms of  the gaussian mean width of $ K_s$. 
\begin{equation*}
\mathbb E \sup _{y\in K_s} 
 \lvert  Z (x,y)\rvert  \lesssim \omega (K_s)  \lesssim \sqrt {s \log_+ \frac ns } .  
\end{equation*}
Furthermore, gaussian processes have a concentration of measure around their means. 
Therefore, for any finite set $X\subset  K_s $, provided 
\begin{equation}\label{e:X<}
\log {} ^{\sharp} X \lesssim s \log_+ \frac ns , 
\end{equation}
we have, with probability at least $  1 - (\frac s {10n}) ^{2s}$, 
\begin{equation}\label{e:X}
\sup _{x\in X, y\in K_s} 
 \lvert  Z (x,y)\rvert \lesssim  \sqrt {s \log_+ \frac ns }. 
\end{equation}

Take $ X \subset K_s$ to be a minimal cardinality collection of 
vectors so that for all $ y\in K_s$, there is an $ x\in X$ with $ d (x,y) \leq \eta =   (\frac s{100n}) ^6  $.
Observe that by the VC property of wedges and Lemma~\ref{l:VC-entropy}, 
\begin{equation*}
{} ^{\sharp} X \lesssim \binom {n+1} s \eta ^{-3 (s+1)}  \lesssim (100 \frac ns) ^{ 20(s+1)}. 
\end{equation*}
This is the estimate \eqref{e:NW}, applied with the uniform probability measure on the sphere. 
Therefore, \eqref{e:X<} holds.  This completes the first stage of the argument. 

\smallskip 
In the second stage, we  have the differences  $ Z (x,y) - Z (x',y)$, which involve  
 $ \overline  x_j - \overline x'_j \in \{-1, 0, 1\}$.   
But these last differences will be non-zero on a small set of indices. 
Indeed, with probability at least $   1 - (\frac s {10n}) ^{2s}$, the inequality below holds.  
\begin{equation} \label{e:small}
\sup _{ \substack{x,x'\in K_s\\ d (x,x') < \eta   }} 
 \sum_{j=1} ^{m}  \lvert  \overline x_j -\overline x_j'\rvert  \leq C \sqrt { m  \eta s \log_+  \frac ns } = u.    
\end{equation}

The left side of \eqref{e:small} is the 
un-normalized Hamming metric between the 
natural mapping of $ x$ and $ x'$ in the Hamming metric. 
With our choice of $ \eta = (\frac s{100n}) ^6 $, note that 
\begin{equation*}
\eta \leq  \sqrt { m  \eta s \log_+  \frac ns }. 
\end{equation*}
Subtracting the actual Hamming metric,      we should bound the supreumum 
\begin{equation}  \label{e:Y}
\sup _{ \substack{x,x'\in K_s\\ d (x,x') < \eta   }} 
\Bigl\lvert  \sum_{j=1} ^{m} \mathbf 1_{W _{x,x'}} (\theta _j) - d (x,x') \Bigr\rvert
\end{equation}
where $ \theta _j = g_j / \lVert g_j\rVert$.    Note that this is a empirical process for which we have 
the concentration of measure of Lemma~\ref{l:martingale}.  Thus, the high probability part of our 
assertion will follow from this fact.  

Now,  Theorem~\ref{t:W}  proves that the expectation in \eqref{e:Y} is bounded by 
\begin{align*}
\sqrt m   \int _{0 } ^{\eta ^{1/2} } \sqrt {s\log _+ \tfrac ns  + s \log_+ \tfrac 1t  }  \; dt 
  \lesssim    \sqrt { m \eta s \log_+ \frac ns} 
\end{align*}
So, \eqref{e:small} holds, and the second stage of the argument is complete. 

\smallskip 

The third and final stage is to show that with probability 
$  1 - (\frac s {10n}) ^{2s}$, 
\begin{equation}\label{e:SupJ}
 \sup _{\substack{J \subset \{1 ,\dotsc, m\}\\ \lvert  J\rvert  \leq u } } \sup _{y\in K_s}   \biggl\lvert \frac 1 {\sqrt m} \sum_{j \in J} \varepsilon _j \overline y_j \biggr\rvert \lesssim \sqrt {s \log_+ \frac n s}.  
\end{equation}
Above, $ u$ is as in \eqref{e:small}.  

The sum in \eqref{e:SupJ} is that of a gaussian process, hence to deduce the claim, it suffices to bound the 
expectation on the left in \eqref{e:SupJ}. 
For fixed $ J \subset \{1 ,\dotsc, m\}$, of cardinality at most $ u$,  the supremum below  
\begin{equation*}
 \sup _{y\in K_s}   \biggl\lvert \frac 1 {\sqrt m} \sum_{j \in J} \varepsilon _j \overline y_j\biggr\rvert
\end{equation*}
is that  of a gaussian process with metric dominated by $  \sqrt {u/m} \lVert  y-y'\rVert_2 $.  
Therefore, the estimate below is immediate. 
\begin{equation*}
\mathbb E  \sup _{y\in K_s}   \biggl\lvert \frac 1 {\sqrt m} \sum_{j \in J} \varepsilon _j \overline  y_j\biggr\rvert 
\lesssim  \sqrt {u/m}   \omega (K_s) \lesssim \sqrt { \frac um \cdot    s \log_+ \frac ns   }.   
\end{equation*}
Since this is universal in $ J$, the next step is to appeal to concentration of measure above the mean. 
For this, we just need to count the number of sets $ J $ of cardinality at most $ u$, of which there are 
clearly at most $ 2\binom mu  $.   
We conclude that 
\begin{equation}   \label{e:ZX}
E \sup _{\substack{J \subset \{1 ,\dotsc, m\}\\ \lvert  J\rvert  \leq u } } \sup _{y\in K_s}   \biggl\lvert \frac 1 {\sqrt m} \sum_{j \in J} \varepsilon _j \overline y_j \biggr\rvert 
\lesssim 
\sqrt { \frac u m} \Bigl( \sqrt { s \log_+ \frac ns }+    \sqrt{u    \log_+ \frac m u }  \Bigr) . 
\end{equation}

It remains to see that the right side of \eqref{e:ZX} is at most $ C \sqrt {s \log_+ \frac ns}$, 
and from this fact, the conclusion \eqref{e:SupJ} follows from concentration of measure.  
The right side of \eqref{e:ZX} has two terms, the first is obviously less than $ \sqrt {s \log_+ \frac ns}$, 
since $  u \leq  m$.  The second is  
\begin{align*}
\frac {  u} {\sqrt m} \sqrt {\log_+ \frac m { u}} 
&=    \sqrt { \eta s  \log_+   \frac  ns }    \times \sqrt { \log_+ \frac m  {\eta  s \log_+  \frac ns}}  
\\
& \lesssim   \sqrt {s \log_+ \frac ns}  \Bigl(\frac s {100 n} \Bigr) ^{3} 
 \sqrt { \log_+ \frac m  {  s \log_+  \frac ns}}  
\\
& \lesssim 
 \sqrt {s \log_+ \frac ns}  \bigl(\frac s {100 n} \bigr) ^{3} \sqrt { \log_+  \delta ^{-2}}  
  \lesssim  \sqrt {s \log_+ \frac ns}  . 
 \end{align*}
Here, we have used the choice of $ \eta =  \bigl(\frac s {100 n} \bigr) ^{6} $, 
$ m \approx \delta ^{-2} s \log_+ \frac ns$,  and the definition 
of $  u$ in \eqref{e:small}.  The last inequality 
follows from the condition on $ \delta $ in \eqref{e:assume}.  This completes the proof.

%%%%%%%%%%%%%%%%%%%%%%%%%%%%%% SECTION  SECTION SECTION
%%%%%%%%%%%%%%%%%%%%%%%%%%%%%% SECTION  SECTION SECTION 
\section{Background on Stochastic Processes} \label{s:prob}

%%%%%%%%%%%%%%%%%%%%%%%%%%%%%% SUBSECTION SUBSECTION SUBSECTION SUBSECTION
 %%%%%%%%%%%%%%%%%%%%%%%%%%%%%% SUBSECTION SUBSECTION SUBSECTION SUBSECTION 
\subsection{Gaussian Processes}%\label{ss.}

A process $ \{Z_t \;:\; t\in T\}$ is a mean zero gaussian process iff for every finite subset $ T'\subset T$, 
the restriction $ \{Z_{t'} \;:\; t\in T\}$ is a finite dimensional mean zero  gaussian vector.  
The process $ Z_t$ induces a metric on $ T$ by 
\begin{equation*}
d _{Z} (s,t) = \lVert X_s - X _t\rVert_2  = \big( \mathbb E \, |X_t - X_s|^2 \big)^{1/2} . 
\end{equation*}
%The important quantity for us 
We shall be interested in the bounded sample path properties, which are  measured by 
\begin{equation*}
\mathbb E \sup _{t} \lvert  Z_t\rvert, \qquad  \mathbb E \sup _{s,t} \lvert  Z_s -Z_t\rvert. 
\end{equation*}
The second expression can be written without absolute values. 

Since a gaussian process is uniquely defined by its mean and covariance, any subset $ K$ of a Hilbert space $ H$ generates  a mean zero gaussian process $ \{Z_t \;:\; t\in K\}$, where 
the metric $ d_Z$ is given by the Hilbert space metric, $ d _{Z} (s,t) = \lVert s-t\rVert_ H $.  
This is in fact how the gaussian mean width and hemisphere processes could be defined. 
If the set $ K$ is symmetric, then the suprema of the gaussian processes, with or without the 
absolute values, are the same.  

The quantity $ \mathbb E \sup _{t\in T} Z_t$ is of basic interest for us.  A foundational  result concerning this quantity is 
the Sudakov lower bound. 

%%%%%%%%%%%%%%%%%%%%%%%%%%%%%% THEOREM THEOREM THEOREM
\begin{theorem}\label{t:sudakov}[Sudakov Lower Bound]  
For a gaussian process $ \{Z_t \;:\; t\in T\}$, one has 
\begin{equation*}
\sup _{\delta } \delta \sqrt {\log N (T, d_Z, \delta )} \lesssim \mathbb E \sup _{s,t} \lvert  Z_s -Z_t\rvert, 
\end{equation*}
where $ N (T, d_Z , \delta )$ is the covering number of $T$ with respect to the metric of the gaussian process $Z$, i.e.  the least number of $ d_Z$-balls of radius $ \delta $ needed to cover $ T$. 
\end{theorem}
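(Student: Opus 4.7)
The plan is to reduce to a finite $\delta$-separated subset of $T$ and then invoke a Gaussian comparison inequality. Fix $\delta>0$ and, using the right-hand inequality in \eqref{e:capacity} applied in the metric $d_Z$, pick a maximal $\delta$-separated collection $\{t_1,\dots,t_N\}\subset T$, where $N \ge N(T,d_Z,\delta)$. It will suffice to prove that $\delta\sqrt{\log N}\lesssim \mathbb E\sup_{s,t\in T}|Z_s-Z_t|$.

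Introduce an auxiliary family $Y_1,\dots,Y_N$ of i.i.d.\ centered Gaussians of variance $\delta^2/2$; this normalization is chosen so that for $i\neq j$,
\[
\mathbb E(Y_i-Y_j)^2 \;=\; \delta^2 \;\le\; d_Z(t_i,t_j)^2 \;=\; \mathbb E(Z_{t_i}-Z_{t_j})^2,
\]
the middle inequality being the $\delta$-separation of the $t_i$. The Sudakov--Fernique comparison inequality, which I would quote as a black box (the usual Gaussian interpolation proof), then gives
\[
\mathbb E\max_{1\le i\le N} Z_{t_i} \;\ge\; \mathbb E\max_{1\le i\le N} Y_i.
\]
A standard truncation argument based on the Gaussian tail lower bound $\mathbb P(Y_1>t) \gtrsim \frac{\delta}{t}e^{-t^2/\delta^2}$ yields $\mathbb E\max_i Y_i \ge c\,\delta\sqrt{\log N}$ for some absolute constant $c>0$.

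To close, I would exploit the symmetry of the mean-zero Gaussian process: $-Z$ has the same law as $Z$, so
\[
\mathbb E\sup_{s,t\in T}|Z_s-Z_t| \;\ge\; \mathbb E\sup_{s,t\in T}(Z_s-Z_t) \;=\; 2\,\mathbb E\sup_{t\in T} Z_t \;\ge\; 2\,\mathbb E\max_i Z_{t_i},
\]
which combined with the previous two displays gives the claim for the fixed $\delta$, and taking the supremum over $\delta>0$ on the left then produces the stated conclusion. The only substantive input is Sudakov--Fernique; the mildly delicate point is the variance normalization $\delta^2/2$ for the $Y_i$, tuned so that the comparison produces exactly the separation distance $\delta$ and not an uncontrolled multiple of it.
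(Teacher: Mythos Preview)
The paper does not supply a proof of this theorem; it is stated in \S\ref{s:prob} as a background fact and is used as a black box throughout (for instance in deriving \eqref{e:compare} and in the proof of Lemma~\ref{l:D}). Your argument is the standard textbook proof of Sudakov's minoration via the Sudakov--Fernique comparison and is correct: the reduction to a finite $\delta$-separated set, the comparison with an i.i.d.\ Gaussian family of variance $\delta^2/2$, the lower bound $\mathbb E\max_i Y_i\gtrsim\delta\sqrt{\log N}$, and the symmetry step $\mathbb E\sup_{s,t}(Z_s-Z_t)=2\,\mathbb E\sup_t Z_t$ are all in order. There is nothing to compare against in the paper itself.
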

%%%%%%%%%%%%%%%%%%%%%%%%%%%%%% THEOREM THEOREM THEOREM

Characterizing the bounded sample path properties of gaussian processes is an important  
accomplishment of Talagrand in the majorizing measure theorem \cite{MR3184689}, to which we referred in the text. \\
%We refer to this theorem at one point below. 

%
%
%%%%%%%%%%%%%%%%%%%%%%%%%%%%%%%  DEFINITION DEFINITION DEFINITION
%\begin{definition}\label{d:gamma}  The quantity $ \gamma _2 (T, d)$ is defined as follows.  
%For integers $ s$  with 
%\begin{equation*}
%s > s_0 = - \lceil \log \sup _{s,t\in T} d (s,t)\rceil , 
%\end{equation*}
%consider  sets $ K_s \subset K$ with $ \lvert  K_s\rvert \leq 2 ^{2 ^{s}} $, 
%and maps $ \pi _s (x) := \textup{argmin} \{  \lvert  x-y\rvert_2 \;:\; y\in K_s \}$ from $ K$ to $ K_s$. 
%Define 
%\begin{equation}\label{e:gamma2}
%\gamma _2 (T,d) = \inf _{ \{K_s\}}\sup _{x\in T} \sum_{s=0} ^{\infty } 2 ^{s/2} d (x, \pi _{s} (x)) \lesssim \omega (K). 
%\end{equation}
%
%\end{definition}
%%%%%%%%%%%%%%%%%%%%%%%%%%%%%%%  DEFINITION DEFINITION DEFINITION
%
%%%%%%%%%%%%%%%%%%%%%%%%%%%%%%% THEOREM THEOREM THEOREM
%\begin{theorem}\label{t:MMT}[Talagrand's Generic Chaining Theorem]  
%For any gaussian process, there holds 
%\begin{equation}\label{e:mmt}
% \mathbb E \sup _{s,t} \lvert  Z_s -Z_t\rvert \simeq \gamma _2 (T, d_Z). 
%\end{equation}
%\end{theorem}
%%%%%%%%%%%%%%%%%%%%%%%%%%%%%%% THEOREM THEOREM THEOREM

We also make use of the concentration of measure. % is important.  
%%%%%%%%%%%%%%%%%%%%%%%%%%%%%% THEOREM THEOREM THEOREM
\begin{theorem}\label{t:GC}[Gaussian Concentration of Measure] 
Let $ Z_ t$ be a mean zero Gaussian process with $ \sigma ^2 = \sup _{t\in T} \mathbb E Z_t ^2  $ the maximal 
variance.  There is  a constant $ c>0$ so that for all $ \lambda >0$
\begin{equation}\label{e:GC}
\mathbb P (c \sup _{t} Z_t > \mathbb E \sup _{t\in T} Z_t +   \lambda ) 
\lesssim \operatorname {exp}(  - (\lambda / \sigma ) ^2 ).  
\end{equation}
\end{theorem}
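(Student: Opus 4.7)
The plan is to prove this via the Borell--Tsirelson--Ibragimov--Sudakov (Borell--TIS) inequality, exploiting the fact that the supremum of a Gaussian process is a Lipschitz function of an underlying standard Gaussian vector, and then applying the Gaussian concentration inequality for Lipschitz functions.

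First I would reduce to the case of a finite index set. By separability of the process (which may be assumed without loss of generality, or handled by taking a countable dense subnet and using monotone convergence), it suffices to prove the bound for $T = \{1,\dots,N\}$ finite, since the bound is uniform in $N$ and both sides are stable under taking suprema along an increasing sequence of finite subsets. Next I would represent the finite Gaussian vector $(Z_1,\dots,Z_N)$ as $Z = Ag$, where $g$ is a standard Gaussian vector on $\mathbb R^{d}$ for some $d$, and the $t$-th row $A_t$ of $A$ satisfies $\|A_t\|_2^2 = \mathbb E Z_t^2 \leq \sigma^2$.

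The key structural observation is that the functional $F(g) := \sup_{t\in T} (Ag)_t$ is $\sigma$-Lipschitz on $\mathbb R^d$. Indeed, for any $g,g' \in \mathbb R^d$,
\begin{equation*}
F(g) - F(g') \leq \sup_{t\in T} \langle A_t, g - g' \rangle \leq \sup_{t\in T} \|A_t\|_2 \cdot \|g - g'\|_2 \leq \sigma \|g-g'\|_2,
\end{equation*}
and symmetry gives $|F(g) - F(g')| \leq \sigma \|g-g'\|_2$. At this point I would invoke the Gaussian concentration inequality for Lipschitz functions: if $F \colon \mathbb R^d \to \mathbb R$ is $L$-Lipschitz and $g$ is a standard Gaussian vector, then
\begin{equation*}
\mathbb P\bigl(F(g) > \mathbb E F(g) + \lambda\bigr) \leq \operatorname{exp}\bigl(-\lambda^2/(2L^2)\bigr), \qquad \lambda > 0.
\end{equation*}
Applied with $L = \sigma$ and $F(g) = \sup_t Z_t$, this yields the claimed inequality, absorbing the constants $1/2$ and the multiplicative constant $c$ into the $\lesssim$.

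The main obstacle is the Lipschitz-concentration step itself, which is the substantive analytic input. I would cite it as a standard consequence of either the Gaussian isoperimetric inequality of Borell and of Sudakov--Tsirelson (half-spaces minimize Gaussian surface area, hence $F^{-1}((-\infty, \mathrm{med}\,F])$ has Gaussian enlargement of measure at least that of a half-space) or alternatively of the Gaussian logarithmic Sobolev inequality together with Herbst's argument. Either route is black-box for this paper's purposes. Passing from concentration around the median to concentration around the mean is routine and costs only a universal constant, which again is absorbed into $c$ and $\lesssim$. All remaining steps --- the matrix representation, the Lipschitz estimate, and the reduction to finite $T$ --- are purely mechanical.
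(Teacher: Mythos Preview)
Your proof is correct and is the standard Borell--TIS argument. Note, however, that the paper does not actually prove this theorem: it is stated in the background section on stochastic processes as a known result, with no proof given. Your write-up supplies exactly the argument one would cite for it (finite reduction, $Z = Ag$ representation, $\sigma$-Lipschitz estimate for the supremum, and Gaussian concentration for Lipschitz functions via isoperimetry or log-Sobolev), so there is nothing to compare against and nothing to correct.
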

%%%%%%%%%%%%%%%%%%%%%%%%%%%%%% THEOREM THEOREM THEOREM

A random variable $ X$ is said to be \emph{sub-gaussian with  parameter $ \sigma $} if 
\begin{equation}\label{e:sub}
\mathbb P (\lvert  X\rvert > \lambda  ) \lesssim \operatorname {exp}(  - (\lambda / \sigma ) ^2 ), \qquad \lambda >0. 
\end{equation}
The supremum of a finite number of  sub-gaussian random variables grows very slowly. 

%%%%%%%%%%%%%%%%%%%%%%%%%%%%%% LEMMA LEMMA LEMMA
\begin{lemma}\label{l:sup} Let $ X_1 ,\dotsc, X_N$ be sub-gaussian r.v.s with maximal parameter $ \sigma $. 
Then, 
\begin{equation}\label{e:sup}
\mathbb E \sup _{1\le n \leq N} \lvert  X_n\rvert \lesssim \sqrt {\log N}.   
\end{equation}

\end{lemma}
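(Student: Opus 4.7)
The plan is the classical tail-integration argument. I would start from the layer-cake identity
\[
\mathbb{E} \sup_{1 \le n \le N} |X_n| = \int_0^\infty \mathbb{P}\Bigl(\sup_n |X_n| > \lambda\Bigr)\, d\lambda
\]
and split the integral at a threshold $\lambda_0$ of order $\sigma \sqrt{\log N}$. For $\lambda \le \lambda_0$ the integrand is trivially at most one, contributing $\lambda_0$, which already matches the target bound. For $\lambda > \lambda_0$, I would apply the union bound together with the sub-gaussian tail \eqref{e:sub} to obtain $\mathbb{P}(\sup_n |X_n| > \lambda) \lesssim N \exp(-\lambda^2/\sigma^2)$. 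The threshold $\lambda_0$ is calibrated precisely so that at $\lambda = \lambda_0$ the right-hand side is $O(1)$; past that point, the Gaussian decay dominates, and the tail integral contributes at most a constant multiple of $\sigma$. Adding the two contributions gives $\mathbb{E} \sup_n |X_n| \lesssim \sigma \sqrt{\log N}$, which is the sharp form of the stated bound.

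An alternative, and slightly slicker, path is the moment-generating function approach. From \eqref{e:sub} one deduces that $\mathbb{E} e^{t X_n} \lesssim e^{C t^2 \sigma^2}$ for all $t > 0$. Jensen's inequality combined with the trivial union bound over the exponentials gives
\[
\exp\bigl( t\, \mathbb{E} \sup_n X_n \bigr) \le \mathbb{E} \exp\bigl( t \sup_n X_n \bigr) \le \sum_{n=1}^N \mathbb{E} e^{t X_n} \lesssim N e^{C t^2 \sigma^2}.
\]
Taking logarithms and optimizing in $t$ (the minimizer is $t \simeq \sigma^{-1} \sqrt{\log N}$) yields $\mathbb{E} \sup_n X_n \lesssim \sigma \sqrt{\log N}$; applying the same argument to $-X_n$ handles the absolute value.

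No step here is a genuine obstacle — the whole content is the judicious balancing of the cutoff $\lambda_0$, or equivalently the tuning of the parameter $t$ in the Chernoff approach. The resulting bound is sharp, as already the maximum of $N$ iid standard normals is of order $\sqrt{\log N}$, so no refinement is possible. The factor of $\sigma$ is implicit in the paper's statement.
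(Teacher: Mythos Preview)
Your proposal is correct; both the tail-integration and the moment-generating-function arguments you sketch are the standard proofs of this classical fact, and either one works cleanly here. The paper itself states Lemma~\ref{l:sup} without proof, treating it as a well-known background result, so there is no ``paper's own proof'' to compare against --- your argument is exactly the kind of justification one would supply if asked to fill in the details.
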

%%%%%%%%%%%%%%%%%%%%%%%%%%%%%% LEMMA LEMMA LEMMA

%%%%%%%%%%%%%%%%%%%%%%%%%%%%%% SUBSECTION SUBSECTION SUBSECTION SUBSECTION
 %%%%%%%%%%%%%%%%%%%%%%%%%%%%%% SUBSECTION SUBSECTION SUBSECTION SUBSECTION 
\subsection{Empirical Processes}%\label{ss.}

Let $ (T, \mu )$ be a probability space, and $ X_1 ,\dotsc, X_m$ are iid random variables, taking values in $ T$ 
with distribution $ \mu $.  For a class of functions $ f \in \mathcal F $, mapping $ T$ into $ \mathbb R $, we define 
the empirical process 
\begin{equation*}
S_m f =  \frac 1 m \sum_{j=1} ^{m} f (X_j) - \int _{T} f \; d \mu ,  \qquad f\in \mathcal F. 
\end{equation*}
Above, we must assume that $ f\in L ^{1} (T, \mu )$, and while 
the case of general functions is quite interesting, but for our purposes we can always assume that $ f$ are 
bounded functions.  Indeed, we mostly work with the case $\mathcal F = \{ {\bf{1}}_{W_{x,y}} \}$. One is then interested in variants of the law of large numbers and central limit theorem 
in this context.   The quantity important to us is 
\begin{equation*}
S _{m} (\mathcal F) :=   \sup _{f\in \mathcal F} \lvert  S_m f\rvert  .  
\end{equation*}
Basic to the analysis of these processes is   \emph{symmetrization},  which we state in two forms.  

%%%%%%%%%%%%%%%%%%%%%%%%%%%%%% LEMMA LEMMA LEMMA
\begin{lemma}\label{l:anderson} \cite{MR757769}
Fix $  s >0$ so that 
\begin{equation*}
\sup _{f\in \mathcal F} \mathbb P (\lvert  S_m f \rvert > s) \leq \tfrac 12 . 
\end{equation*}
Then, for $ t>0$, there holds 
\begin{equation*}
\mathbb P \bigl( \sup _{f\in \mathcal F} \lvert   S_m f \rvert >t   \bigr) 
\leq 2 \mathbb P \bigl( \sup _{f\in \mathcal F} \lvert   S_m f - S'_m f \rvert >t   -s  \bigr) 
\end{equation*}
where $ S'_m f$ is an independent copy of $ S_m f$. 
\end{lemma}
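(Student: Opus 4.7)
The plan is to prove this by the classical symmetrization-via-independent-copy argument. On the event $A := \{\sup_{f\in\mathcal F}|S_m f|>t\}$ I would select (in a measurable way) a witness $f^\ast=f^\ast(X_1,\dots,X_m)\in\mathcal F$ with $|S_m f^\ast|>t$. The key point is that $f^\ast$ depends only on the original sample, so the independent copy $S'_m$ is independent of $f^\ast$. Hence the hypothesis $\sup_{f}\mathbb P(|S_m f|>s)\le \tfrac12$ can be applied fiber-wise to $f=f^\ast$, giving
\[
\mathbb P\bigl(|S'_m f^\ast|\le s \bigm|\, X_1,\dots,X_m\bigr)\ge \tfrac12 \quad\text{on } A.
\]

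Next I would apply the triangle inequality: on the joint event $A\cap\{|S'_m f^\ast|\le s\}$ we have
\[
\sup_{f\in\mathcal F}|S_m f - S'_m f|\;\ge\;|S_m f^\ast - S'_m f^\ast|\;\ge\;|S_m f^\ast|-|S'_m f^\ast|\;>\;t-s.
\]
Integrating out the independent copy via Fubini then gives
\[
\mathbb P\bigl(\sup_{f\in\mathcal F}|S_m f-S'_m f|>t-s\bigr)\;\ge\;\mathbb E\bigl[\mathbf 1_A\,\mathbb P(|S'_m f^\ast|\le s\mid X_1,\dots,X_m)\bigr]\;\ge\;\tfrac12\mathbb P(A),
\]
which is exactly the claimed inequality after multiplying by $2$.

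The main (minor) obstacle is the measurable-selection step: in general one cannot literally choose an exact supremum-attaining $f^\ast$. I would handle this in the standard way by assuming $\mathcal F$ is countable, or by reducing to a countable dense subfamily (the classes $\mathcal F=\{\mathbf 1_{W_{x,y}}\}$ used earlier in the paper are of VC type and admit such reductions), or alternatively by taking any $f^\ast$ for which $|S_m f^\ast|>t-\varepsilon$ and letting $\varepsilon\downarrow 0$ at the end. Everything else in the argument is bookkeeping, and the proof recovers the Gin\'e--Zinn form of symmetrization cited as \cite{MR757769}.
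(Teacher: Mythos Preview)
Your argument is correct and is precisely the standard proof of this symmetrization inequality. Note that the paper does not supply its own proof of this lemma; it simply states the result with a citation to \cite{MR757769}, so there is nothing to compare against beyond observing that your proof is the classical one.
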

%%%%%%%%%%%%%%%%%%%%%%%%%%%%%% LEMMA LEMMA LEMMA

A second symmetrization lemma for expectation is as follows. 

%%%%%%%%%%%%%%%%%%%%%%%%%%%%%% LEMMA LEMMA LEMMA
\begin{lemma}\label{l:symmetry}[Symmetrization]  
For any class of $ \mu $-integrable functions $ \mathcal F$ one has 
\begin{align}\label{e:rad}
\mathbb E S _m (\mathcal F)
&\leq 2\mathbb E \sup _{f\in \mathcal F}  
\Bigl\lvert  \frac 1 m \sum_{j=1} ^{m}  r_j f (X_j) \Bigr\rvert 
\\  \label{e:symmetry}
&\leq 2
\mathbb E \sup _{f\in \mathcal F}  
\Bigl\lvert  \frac 1 m \sum_{j=1} ^{m}  g_j f (X_j)  \Bigr\rvert . 
\end{align}
Above, $ \{r_j\}$ denote Rademacher and $ \{g_j\}$ standard gaussian random variables, both independent of the $ \{X_j\}$. 
\end{lemma}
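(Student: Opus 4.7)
The plan is to apply the classical symmetrization argument in two stages, one for each inequality.

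For the first inequality, I would introduce an independent copy $X_1', \dotsc, X_m'$ of the sample, with $X_j'$ independent of all $X_k$ and distributed as $\mu$. Since $\int f \, d\mu = \mathbb{E}[\tfrac{1}{m}\sum_j f(X_j') \mid X_1, \dotsc, X_m]$, I can write
\begin{equation*}
S_m f = \mathbb{E}\Bigl[ \tfrac 1m \sum_{j=1}^m (f(X_j) - f(X_j')) \;\Big|\; X_1,\dotsc,X_m \Bigr],
\end{equation*}
and then Jensen's inequality (pulling the inner conditional expectation past $\sup_f |\cdot|$) gives
\begin{equation*}
\mathbb{E} \sup_{f \in \mathcal F} |S_m f| \leq \mathbb{E} \sup_{f \in \mathcal F} \Bigl| \tfrac 1m \sum_{j=1}^m (f(X_j) - f(X_j')) \Bigr|.
\end{equation*}
Since $(X_j, X_j')$ is exchangeable for each $j$, the random variable $f(X_j) - f(X_j')$ is symmetric, so introducing iid Rademacher variables $r_j$ independent of everything does not alter the joint distribution of $\{f(X_j) - f(X_j')\}_{j=1}^m$ viewed as a process in $f$. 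The triangle inequality then splits the symmetrized sum, giving the first inequality with constant $2$.

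For the second inequality, I would exploit the decomposition $g_j = |g_j| \cdot \operatorname{sgn}(g_j)$, noting that $\operatorname{sgn}(g_j)$ is Rademacher, independent of $|g_j|$, and that $\mathbb{E}|g_j| = \sqrt{2/\pi}$. Setting $r_j := \operatorname{sgn}(g_j)$ and conditioning on the $r_j$'s and $X_j$'s, Jensen's inequality applied to $|g_j|$ yields
\begin{equation*}
\mathbb{E} \sup_{f \in \mathcal F} \Bigl| \tfrac 1m \sum_{j=1}^m g_j f(X_j) \Bigr| \geq \mathbb{E} \sup_{f \in \mathcal F} \Bigl| \tfrac 1m \sum_{j=1}^m \mathbb{E}|g_j|\, r_j f(X_j) \Bigr| = \sqrt{\tfrac{2}{\pi}}\, \mathbb{E} \sup_{f \in \mathcal F} \Bigl| \tfrac 1m \sum_{j=1}^m r_j f(X_j) \Bigr|.
\end{equation*}
Since $\sqrt{\pi/2} < 2$, rearranging gives the second inequality in the stated (slightly loose) form.

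There is no real obstacle here beyond the usual measurability caveat for $\sup_{f \in \mathcal F}$, which is handled either by reducing to a countable subfamily under mild separability assumptions on $\mathcal F$, or by interpreting $\mathbb{E}\sup$ as an outer expectation in the sense of van der Vaart and Wellner. Both steps are entirely standard; the nontrivial content of the paper lies downstream, where one controls the Rademacher and Gaussian symmetrized suprema via chaining against the metrics of the gaussian mean width process $\gamma_x$ and the hemisphere process $G_x$.
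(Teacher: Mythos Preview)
Your argument is correct and entirely standard. The paper does not actually prove this lemma: it is stated as background, with only the one-line remark afterward that, conditional on the $\{X_j\}$, the Rademacher process is sub-gaussian and hence dominated by the conditional Gaussian process. Your explicit ghost-sample argument for \eqref{e:rad} and the $g_j = |g_j|\operatorname{sgn}(g_j)$ plus Jensen argument for \eqref{e:symmetry} are precisely the textbook proofs underlying that remark, so there is nothing to add.
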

%%%%%%%%%%%%%%%%%%%%%%%%%%%%%% LEMMA LEMMA LEMMA

In the first line \eqref{e:rad}, conditional on the $ \{X_j\}$, the process is a \emph{rademacher process}, 
which is subgaussian, in the sense of \eqref{e:sub}. Hence, it is dominated by the conditional 
gaussian process in \eqref{e:symmetry}.  
The conditional  metric  on the set $ \mathcal F$ is of fundamental importance. It is 
\begin{equation} \label{e:randomMetric}
d (f,g) =  \frac 1 m \sum_{j=1} ^{m}   \lvert f (X_j) - g (X_j)\rvert ^2 .  
\end{equation}

Estimating the term $ S_m (\mathcal F)$ is fundamental in the case when $ \mathcal F$ consists 
of functions bounded by one, in view of the following  deviation inequality, which is an application of the Hoeffding inequality, 
and is sometimes called the McDiarmid inequality. 

%%%%%%%%%%%%%%%%%%%%%%%%%%%%%% LEMMA LEMMA LEMMA
\begin{lemma}\label{l:martingale}\cite{MR882849} Suppose that $ \mathcal F$ consist of functions bounded by one.  
There holds 
\begin{equation}\label{e:martingale} 
\mathbb P (  S_m (\mathcal F)  > \lambda +  \mathbb E  S_m (\mathcal F) ) 
\lesssim \operatorname {exp}(- m \lambda ^2 /2 )
\end{equation}

\end{lemma}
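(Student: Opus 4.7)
The plan is to recognize this as a direct application of the bounded differences (McDiarmid) inequality to the functional
\[
F(x_1, \ldots, x_m) := \sup_{f \in \mathcal F} \Bigl\lvert \tfrac{1}{m} \sum_{j=1}^m f(x_j) - \textstyle\int f \, d\mu \Bigr\rvert,
\]
so that $S_m(\mathcal F) = F(X_1, \ldots, X_m)$. First I would verify the bounded differences condition. Fix an index $i$ and two admissible values $x_i, x_i'$, and compare $F$ on the two inputs that differ only in the $i$-th coordinate. For each fixed $f \in \mathcal F$, the centered empirical average $\tfrac{1}{m}\sum_j f(x_j) - \int f \, d\mu$ changes by $\tfrac{f(x_i) - f(x_i')}{m}$, whose absolute value is at most $2/m$ since $\lVert f \rVert_\infty \le 1$. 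The elementary inequality $\lvert \sup_f a_f - \sup_f b_f \rvert \le \sup_f \lvert a_f - b_f \rvert$, applied to the family $\{\lvert S_m f\rvert\}_{f \in \mathcal F}$, shows that $F$ itself changes by at most $2/m$ when any one coordinate is altered.

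With bounded-differences constants $c_1 = \cdots = c_m = 2/m$, the McDiarmid inequality then yields
\[
\mathbb P\bigl( F > \mathbb E F + \lambda \bigr) \le \exp\!\Bigl( - \tfrac{2 \lambda^2}{\sum_{i=1}^m c_i^2} \Bigr) = \exp\!\Bigl( - \tfrac{2 \lambda^2}{4/m} \Bigr) = \exp\bigl( - m \lambda^2 / 2 \bigr),
\]
which is exactly \eqref{e:martingale}. If one prefers to unpack McDiarmid, introduce the Doob martingale $M_k := \mathbb E[F \mid X_1, \ldots, X_k]$ and apply the Azuma--Hoeffding inequality to its differences $M_k - M_{k-1}$: the bounded-differences estimate, combined with independence of the $X_i$, makes each increment essentially bounded in range by $2/m$, and the standard sub-Gaussian tail bound for martingales with bounded increments delivers the stated inequality.

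The only step requiring any real care is handling the \emph{absolute value} inside the supremum defining $S_m(\mathcal F)$. This is cleanest if one writes $\sup_{f \in \mathcal F} \lvert S_m f\rvert = \sup_{f \in \mathcal F \cup (-\mathcal F)} S_m f$ and notes that enlarging the class does not affect the per-coordinate oscillation of each $S_m f$; alternatively, the sup-of-differences inequality above can be invoked directly for $|S_m f|$. Beyond that, no real obstacle arises, since the hypothesis $\lVert f\rVert_\infty \le 1$ is exactly what is needed to produce the clean constant $2/m$ and hence the exponent $m\lambda^2/2$.
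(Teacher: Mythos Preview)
Your proposal is correct and matches the paper's approach: the paper does not give a proof but simply cites \cite{MR882849} and remarks in the surrounding text that this ``is an application of the Hoeffding inequality, and is sometimes called the McDiarmid inequality.'' Your bounded-differences argument is exactly the standard derivation the paper has in mind.
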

%%%%%%%%%%%%%%%%%%%%%%%%%%%%%% LEMMA LEMMA LEMMA

%%%%%%%%%%%%%%%%%%%%%%%%%%%%%%% PROOF PROOF PROOF
%\begin{proof}
%Notice that 
%\begin{equation*}
%\phi _ j =  \mathbb E \bigl( S_m (\mathcal F) \;|\;   X_1 ,\dotsc, X_j \bigr) 
%\end{equation*}
%is a martingale, with $ \phi _{m} = S_m (\mathcal F)$.   Moreover, the increments are bounded: 
%\begin{align*}
%\lvert    \phi _{j+1} - \phi _j \rvert  & = 
%\bigl\lvert  \mathbb E \bigl( \mathbb E  (S_m (\mathcal F) \;|\;  X_ {j+1} )  
%- S_m (\mathcal F) \;|\; X_1 ,\dotsc, X_j \bigr) 
%\bigr\rvert
%\leq \frac 1m
%\end{align*}
%since only the value of $ X _{j+1}$ can influence the difference, and it's contribution to the emprical measure is 
%at most $ 1/m$.  But, it follows that the square function of the martingale $ \phi _j$  is
%\begin{equation*}
%S (\phi ) ^2 := \sum_{j=1} ^{m}  \lvert  \phi _j - \phi  _{j-1}\rvert ^2 \leq \frac 1 m.   
%\end{equation*}
%And, then the inequality of the Lemma is an instance of the Azuma-Hoeffding inequality.  
%\end{proof}
%%%%%%%%%%%%%%%%%%%%%%%%%%%%%%% PROOF PROOF PROOF

To estimate $ S_m (\mathcal F)$, where $ \mathcal F $ consists of indicator sets, one should 
divide $ \mathcal F$ into parts which are either governed by the gaussian theory, or have  no cancellative part, and so should be controlled by Poisson like behavior.  
While there are several techniques here, one should not forget that concentration of measure 
on the sphere is an obstacle to the use of techniques such as `bracketing.'  
A succinct summation of the main conjectures in the subject are in \cite{MR3184689}*{Chap. 9}. 

%%%%%%%%%%%%%%%%%%%%%%%%%%%%%% SUBSECTION SUBSECTION SUBSECTION SUBSECTION
 %%%%%%%%%%%%%%%%%%%%%%%%%%%%%% SUBSECTION SUBSECTION SUBSECTION SUBSECTION 
\subsection{VC Dimension}%\label{ss.}

Let $ (M, \mathcal M) $ be a measure space, and $ \mathcal C \subset M$ a class of measurable sets.  
For integers $ n$, let 
\begin{equation*}
S (n, \mathcal C) := \sup _{ \substack{A\subset M\\  \lvert  A\rvert=n  }}  \lvert   \{ B \;:\; B= A \cap C,\ C\in \mathcal C\}\rvert 
\end{equation*}
That is, $ S (n)$ is the largest number of subsets that can be formed by intersecting a set $ A$ of cardinality 
$ n$ with sets $ C\in \mathcal C$.  
It is clear that $ S (n) \leq 2 ^{n}$. 
 The Vapnik-Cervonenkis dimension  (VC-dimension) 
  $\nu (\mathcal C)$ of $ \mathcal C$ is the least integer $ d$ such that  $ S (n) < 2 ^{n}$ for all $ n>d$.  
The Sauer Lemma then states that 
\begin{equation}\label{e:shelah}
S (n) < \Bigl(\frac {n  e ^{}} {d}  \Bigr) ^{d},  \qquad n > 1.  
\end{equation}

For us, the class of relevant sets are \emph{hemispheres.}  More generally, for spherical caps in $ \mathbb S ^{n}$, 
the VC-dimension is      $n+1 $. 

%%%%%%%%%%%%%%%%%%%%%%%%%%%%%% PROPOSITION PROPOSITION PROPOSITION
\begin{proposition}\label{p:vch} The family   $ \mathcal C_{n}$ of spherical caps on $ \mathbb S ^{n}$ has VC-dimension 
 $ n+1$.  
\end{proposition}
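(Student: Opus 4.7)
The approach is to identify spherical caps with intersections of half-spaces in $\mathbb{R}^{n+1}$ with the sphere, and to reduce the combinatorial shattering question to linear algebra in the ambient Euclidean space. A cap on $\mathbb{S}^n$ has the form $C_{\theta,t} = \{x \in \mathbb{S}^n : \langle x,\theta\rangle \ge t\}$, so $\mathcal{C}_n$ embeds into the family of affine half-spaces in $\mathbb{R}^{n+1}$, while hemispheres correspond to the linear case $t=0$.

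For the lower bound $\nu(\mathcal{C}_n) \ge n+1$, I would exhibit $n+1$ shatterable points by taking the standard basis $\{e_1,\dots,e_{n+1}\} \subset \mathbb{S}^n$. For each $S \subset \{1,\dots,n+1\}$ define $\theta_S$ to be the unit vector proportional to $\sum_{i \in S} e_i - \sum_{i \notin S} e_i$. Then $\langle e_j, \theta_S\rangle$ is strictly positive when $j \in S$ and strictly negative otherwise, so the hemisphere $\{x : \langle x, \theta_S\rangle \ge 0\}$ is a spherical cap realizing exactly the subset $\{e_i : i \in S\}$; all $2^{n+1}$ subsets are realized.

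For the upper bound $\nu(\mathcal{C}_n) \le n+1$, I would show that no set of $n+2$ points is shattered. Given $x_1,\dots,x_{n+2} \in \mathbb{S}^n$, viewed as $n+2$ vectors in the $(n+1)$-dimensional space $\mathbb{R}^{n+1}$, there is a nontrivial linear relation $\sum_i c_i x_i = 0$. Split the indices by the sign of $c_i$ into $I^+$ and $I^-$; after replacing $c_i \mapsto -c_i$ if necessary one may assume both parts are nonempty (the degenerate case, in which all nonzero $c_i$ have a common sign, corresponds to the origin lying in the positive cone of the $x_i$, which separately blocks the ``all in'' subset from being realized by any cap). If some cap $C_{\theta,t}$ selects exactly $I^+$ among the $x_i$, then pairing $\sum c_i x_i = 0$ with $\theta$ yields
\begin{equation*}
0 \;=\; \sum_{i \in I^+} c_i \langle x_i,\theta\rangle + \sum_{i \in I^-} c_i \langle x_i,\theta\rangle,
\end{equation*}
and the sign pattern, combined with $\langle x_i,\theta\rangle \ge t$ on $I^+$ versus $\langle x_i,\theta\rangle < t$ on $I^-$, should produce a strict inequality incompatible with the equality above.

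\textbf{Main obstacle.} The sign argument closes cleanly for hemispheres ($t=0$), where every term of $\sum c_i \langle x_i,\theta\rangle$ automatically carries the sign needed to contradict the linear relation. For caps with $t \ne 0$ the offset must be absorbed, which requires either producing an auxiliary identity from the sphere constraint $\|x_i\|^2 = 1$ (to upgrade the linear dependence to an effectively affine one) or treating the cases $t > 0$ and $t < 0$ separately and tracking the signs of $\sum_{I^+} c_i$ versus $\sum_{I^-}|c_i|$. This sign bookkeeping, and the careful use of the sphere constraint to obtain the bound $n+1$ rather than the $n+2$ that a straightforward Radon argument in $\mathbb{R}^{n+1}$ would yield, is the main technical hurdle.
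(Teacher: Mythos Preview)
Your lower-bound construction and the paper's are both correct; yours (the full standard basis shattered by sign-vector hemispheres) is tidier than the paper's choice of $\{e_1,\dots,e_n,\mathbf 1/\sqrt n\}$, which requires a small case analysis.

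For the upper bound the paper does exactly what you propose, only phrased through Radon's theorem rather than an explicit linear relation: if a set is shattered by caps then no bipartition of it can have intersecting convex hulls, and Radon forces such a bipartition once there are enough points. The ``main obstacle'' you flag is genuine, and in fact it is insurmountable: the proposition as stated is false. Spherical caps on $\mathbb S^n$ are traces of affine half-spaces in $\mathbb R^{n+1}$, and the $n+2$ vertices of a regular simplex inscribed in $\mathbb S^n$ are affinely independent in $\mathbb R^{n+1}$ and hence shattered by such half-spaces; concretely, three equally spaced points on $\mathbb S^1$ are shattered by arcs. So the VC dimension of general caps is $n+2$, not $n+1$. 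The paper's proof slips exactly where you hesitate, invoking Radon for ``$n+2$ points in $\mathbb R^n$'' while the points actually sit in $\mathbb R^{n+1}$, where Radon requires $n+3$ points.

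What does hold, and is all the paper subsequently uses, is that \emph{hemispheres} (the homogeneous case $t=0$) have VC dimension $n+1$. Your $t=0$ argument already proves this: from $\sum_i c_i x_i=0$ with mixed signs, pairing with $\theta$ forces every term of $\sum_i c_i\langle x_i,\theta\rangle$ to be nonnegative and at least one to be strictly positive, contradicting the relation. In the one-sign degenerate case your remedy needs a small correction: it is not the ``all in'' subset but the empty subset that is unreachable, since $\langle x_i,\theta\rangle<0$ for every $i$ with $c_i>0$ would give $\sum_i c_i\langle x_i,\theta\rangle<0=\langle \sum_i c_i x_i,\theta\rangle$. With that adjustment your hemisphere argument is complete, and it is the correct statement for the paper's purposes.
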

%%%%%%%%%%%%%%%%%%%%%%%%%%%%%% PROPOSITION PROPOSITION PROPOSITION

%%%%%%%%%%%%%%%%%%%%%%%%%%%%%% PROOF PROOF PROOF
\begin{proof}
This is a modification of the proof in \cite{MR3000572}*{Prop. 8}. 
A collection  $ A$ of points on the unit sphere that is shattered by spherical caps has the additional 
property that for any partition of $ A $ into disjoint subsets $ A'$ and $ A''$, the convex hulls of $ A'$ 
and $ A''$ can't intersect.  This is just because a spherical cap is itself the intersection of the sphere with 
a half-space.  
But Radon's Theorem, a basic result in convex geometry,
says that any $ n+2$ points in $ \mathbb R ^{n}$ can be partitioned into two 
disjoint sets whose convex hulls intersect.   Hence $ \nu (\mathcal C_n) < n+2$.  

We show that spherical caps shatter the collection of  $ n+1$ vectors 
\begin{equation*}
A = \{\mathbf e_1 ,\dotsc, \mathbf e_ {n},\  \mathbf 1/\sqrt {n}\}  
\end{equation*}
where the $ \mathbf e_j $ is the standard basis elements, and $ \mathbf 1$ is the vector of all ones.  
Consider a partition of $ A$ into $ A'$ and $ A''$, supposing that $  \mathbf 1/\sqrt {n+1} \in A''$. 
If either $ A'$ or $ A''$ is a singleton, a small radius spherical cap around the singleton 
provides this decomposition.  Otherwise, observe that  there is a $ \theta \in \mathbb S ^{n}$ with 
\begin{equation*}
\langle x, \theta  \rangle =0 \quad x\in A', 
\end{equation*}
and  $ \langle x, \theta  \rangle >0 $ for $ x\in A''$.  Indeed, the first condition  
only requires that the $ j$th coordinates of $ \theta $ be zero, for $ \mathbf e_j \in A'$.  
And otherwise we require that $ j$th coordinate of $ \theta $ be positive, which we can 
do since $ A''$ is not a singleton.  
\end{proof}
%%%%%%%%%%%%%%%%%%%%%%%%%%%%%% PROOF PROOF PROOF

Thus, the class of hemispheres in $ \mathbb S ^{n}$ has dimension at most $ n+1$. 
Now, we are also interested in the class of symmetric differences of hemispheres. 
But, it follows from   estimate \eqref{e:shelah} that forming the the symmetric difference 
of a class $ \mathcal C$ of finite VC-dimension increases the dimension by at most a factor of 3.  
In symbols, $ \nu (\mathcal C \triangle  \mathcal C) \leq 3 \nu (\mathcal C)$.

The next important property of VC-classes is that they are universal for empirical processes. 
This is encoded in an inequality of this form.  

%%%%%%%%%%%%%%%%%%%%%%%%%%%%%% LEMMA LEMMA LEMMA
\begin{lemma}\label{l:VC-entropy}  Let $ \mathcal C$ be a collection of sets on a measure space $ (M, \mathcal M)$ 
with VC-dimension $ d$. Let $ P$ be any probability measure on $ (M, \mathcal M)$, and set 
metric $ d_P $ on $ \mathcal C$ by $ d_P (C_1, C_2) = P (C_1 \triangle C_2)$.  
For the metric entropy numbers $ N (\mathcal C, d_P, \delta )$, we have the estimate 
\begin{equation*}
N (\mathcal C, d_P, \delta ) \lesssim (\delta/2) ^{-4d}, \qquad 0< \delta < 1. 
\end{equation*}

\end{lemma}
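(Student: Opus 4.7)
The plan is to combine a random-sampling argument with Sauer's Lemma \eqref{e:shelah}. By \eqref{e:capacity}, it is enough to bound the packing number $M := M(\mathcal C, d_P, \delta)$ from above. Fix a maximal $\delta$-separated family $C_1, \dots, C_M \in \mathcal C$, so that $P(C_i \triangle C_j) > \delta$ for each pair $i \neq j$.

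The idea is to show that these $M$ sets are ``distinguishable'' by a modest number of random samples from $P$. Draw $X_1, \dots, X_n$ iid from $P$, and consider the traces $T_i := \{k : X_k \in C_i\} \subset \{1, \dots, n\}$. If $T_i = T_j$ then no sample $X_k$ lies in $C_i \triangle C_j$, an event of probability at most $(1 - P(C_i \triangle C_j))^n \le e^{-n\delta}$. A union bound over the $\binom{M}{2}$ pairs then gives a positive probability of all $M$ traces being distinct, provided $M^2 e^{-n\delta} < 1$; the choice $n := \lceil 2\delta^{-1}\log M \rceil$ suffices. For such a realization, the $M$ traces are distinct subsets of $\{X_1, \dots, X_n\}$ cut out by $\mathcal C$, so Sauer's Lemma yields
\begin{equation*}
M \le S(n, \mathcal C) \le \Bigl(\frac{en}{d}\Bigr)^d \le \Bigl(\frac{3 e \log M}{d \delta}\Bigr)^d.
\end{equation*}

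The remaining step is purely algebraic: invert this implicit bound to extract a clean power of $1/\delta$. Taking $d$-th roots gives $M^{1/d} \le (3 e / d\delta)\log M$, and using $\log M \le d \cdot M^{1/(2d)}$ for $M$ large enough, one obtains $M^{1/(2d)} \lesssim 1/\delta$, i.e. $M \lesssim \delta^{-4d}$. Choosing constants carefully and absorbing a factor of $2$ into the base yields the stated estimate $N(\mathcal C, d_P, \delta) \le M \lesssim (\delta/2)^{-4d}$ for $0 < \delta < 1$.

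The main obstacle is not conceptual but bookkeeping: the argument is a standard Dudley–Pollard type estimate, and the only subtle point is the final implicit-bound inversion, where one must be careful about the range of $\delta$ to be sure the numerical constants work out so that the exponent $4d$ suffices (a sloppier version of the same argument gives $(C/\delta)^{cd}$ for some larger $c$, which still suffices for every application in the paper). The choice $n \sim \delta^{-1}\log M$ is what couples the combinatorial VC growth $(en/d)^d$ to the geometric scale $\delta$, and is the heart of the argument.
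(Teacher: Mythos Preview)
The paper states this lemma in \S\ref{s:prob} as background without supplying a proof, so there is no paper argument to compare yours against. Your proof is the standard Dudley--Pollard random-sampling argument and is correct.

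Two minor remarks on the bookkeeping. First, the inequality $\log M \le d\,M^{1/(2d)}$ actually holds for \emph{all} $M \ge 1$, not only for large $M$: writing $\log M = d\log\bigl(M^{1/d}\bigr)$ and using $\log y \le \sqrt{y}$ for every $y>0$ gives it outright, so no side condition is needed. Second, from $M^{1/(2d)} \lesssim 1/\delta$ one obtains $M \lesssim \delta^{-2d}$, not $\delta^{-4d}$; your arithmetic slip is in the favorable direction, so the stated bound $N(\mathcal C, d_P, \delta) \lesssim (\delta/2)^{-4d}$ follows a fortiori. As you note yourself, any bound of the shape $(C/\delta)^{cd}$ with absolute constants is all that the applications in the paper require.
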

%%%%%%%%%%%%%%%%%%%%%%%%%%%%%% LEMMA LEMMA LEMMA

%%%%%%%%%%%%%%%%%%%%%%%%%%%%%% SUBSECTION SUBSECTION SUBSECTION SUBSECTION
 %%%%%%%%%%%%%%%%%%%%%%%%%%%%%% SUBSECTION SUBSECTION SUBSECTION SUBSECTION 
\subsection{Uniform Entropy Classes}%\label{ss.}

It is convenient to recall a consequence of Lemma~\ref{l:VC-entropy} in somewhat greater  generality.  
The class  $ \mathcal W _{s} :=   \{ W_{x,y} :\; x,y \in K_s\}$ of symmetric differences of 
hemispheres associated to sparse vectors is not VC, unless we 
further restrict the coordinates in which $ x$ is not zero.  But, this class of sets is still `universal' in this sense.  
For any probability measure $ P$ on $ \mathbb S ^{n}$, define the metric $ d_P$ as in Lemma~\ref{l:VC-entropy}. 
We have the following  estimate on the metric entropy of $ \mathcal W _{s}$. 
\begin{equation}\label{e:NW}
N (\mathcal W_s, d_P, \delta ) \lesssim  {\binom {n+1}s} ^2  (\delta/2) ^{-12s-24}, \qquad 0< \delta < 1. 
\end{equation}
This is a direct consequence of Proposition~\ref{p:vch} and  Lemma~\ref{l:VC-entropy}, after holding the coordinates in which $ x$ and $ y$ 
are supported fixed.  

The class $ \mathcal W_s$ satisfies the \emph{uniform entropy bounds} of Panchenko. 
And, as a corollary to \cite{MR1887174}*{Cor. 1}, we derive the result below.  (See the beginning of \cite{MR1887174}* {\S1.1} 
        for relevant definitions. The proof of the Corollary is a standard, but somewhat delicate, chaining argument, using the uniform entropy bounds.)   

%%%%%%%%%%%%%%%%%%%%%%%%%%%%%% THEOREM THEOREM THEOREM
\begin{theorem}\label{t:W}  For $ \{ \theta _j\}$ iid and uniform on $ \mathbb S ^{n}$,  and $ 0 < \eta < 1$, 
and $ u> 0$, with probability at least $ 1 - 2 e ^{- u^2 }$, 
\begin{equation}\label{e:W}
\sup _{ \substack{x,y \in K_s\\ d (x,y) < \eta  }} 
\frac 1 {\sqrt m} \sum_{j=1} ^{m} \big( \mathbf 1_{W _{x,y}} (\theta _j) - d (x,y)  \big) 
\lesssim   \int _{0 } ^{\eta ^{1/2} } \sqrt {s\log _+ \tfrac ns  + s \log_+ \tfrac 1t  }  \; dt +   u \sqrt \eta .  
\end{equation}

\end{theorem}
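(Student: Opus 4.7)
The plan is to run a Dudley-style chaining argument on the restricted subclass
$\mathcal W_{s,\eta} := \{W_{x,y}\colon x,y\in K_s,\; d(x,y)<\eta\}$
and then invoke the Panchenko deviation inequality \cite{MR1887174}*{Cor. 1} to upgrade the expectation bound to a high-probability estimate. First I would observe the two structural facts that drive everything: every $W\in\mathcal W_{s,\eta}$ has $P(W)=d(x,y)<\eta$ under the uniform measure $P$ on $\mathbb S^{n}$, so each indicator lies in an $L^{2}(P)$-ball of radius $\sqrt\eta$ around $0$, and the entire class is uniformly bounded by $1$.

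Symmetrize using Lemma~\ref{l:symmetry} to replace $m^{-1/2}\sum_j(\mathbf 1_{W_{x,y}}(\theta_j)-d(x,y))$ by a Gaussian process $f\mapsto m^{-1/2}\sum_j g_j f(\theta_j)$ with conditional (sub-)Gaussian metric \eqref{e:randomMetric}, which for indicators is the empirical distance $\hat d_m(W,W')^{2}=\tfrac1m\sum_j\mathbf 1_{W\triangle W'}(\theta_j)$. On the event where $\hat d_m$ is comparable to $d_P$ (which, crucially, holds \emph{uniformly} for VC-type classes precisely because \eqref{e:NW} is a uniform entropy bound valid for every probability measure), Dudley chaining gives the conditional bound
\begin{equation*}
\mathbb E\Bigl[\sup_{W\in\mathcal W_{s,\eta}}\Bigl|\tfrac1{\sqrt m}\sum_{j=1}^{m}g_j\mathbf 1_W(\theta_j)\Bigr|\,\Big|\,\{\theta_j\}\Bigr]\lesssim\int_{0}^{\sigma_m}\sqrt{\log N(\mathcal W_{s,\eta},\hat d_m,t)}\,dt,
\end{equation*}
with $\sigma_m\lesssim\sqrt\eta$. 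Plugging in \eqref{e:NW} converts $\log N\lesssim s\log_+(n/s)+s\log_+(1/t)$, and taking expectations yields the integral term appearing in \eqref{e:W}.

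The tail bound is then extracted from Panchenko's corollary, which for a class of indicators satisfying a uniform polynomial entropy bound asserts concentration of $S_m(\mathcal F)$ above its mean at scale $u\cdot\sigma$, where $\sigma$ is the uniform $L^{2}(P)$-radius of the class. Here $\sigma=\sqrt\eta$, producing the additive term $u\sqrt\eta$. Combining the expectation estimate with this deviation inequality, and absorbing the polynomial factor $(t/2)^{-12s-24}$ in \eqref{e:NW} into the $s\log_+(1/t)$ contribution, delivers \eqref{e:W}.

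The main obstacle is the step of chaining against the random empirical metric and ensuring that the resulting bound is measure-free. This is the precise content for which uniform entropy classes were introduced: the estimate \eqref{e:NW} holds for \emph{every} probability measure $P$, so the chaining constants are deterministic and transfer without loss from $d_P$ to $\hat d_m$. Once \eqref{e:NW} and the radius-$\sqrt\eta$ observation are in place, the result is the canonical Panchenko application, and the remaining work is bookkeeping of constants in the integral and verifying that the $u\sqrt\eta$ term dominates the fluctuations of the supremum at the claimed probability.
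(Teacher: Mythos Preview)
Your proposal is correct and follows the same route as the paper: the paper's proof is simply the one-line observation that the uniform entropy bound \eqref{e:NW} places $\mathcal W_s$ within the scope of Panchenko's \cite{MR1887174}*{Corollary~1}, which directly outputs \eqref{e:W}. What you have written is a faithful sketch of the chaining-plus-deviation argument that sits inside Panchenko's corollary, so you are unpacking the black box rather than taking a different path.
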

%%%%%%%%%%%%%%%%%%%%%%%%%%%%%% THEOREM THEOREM THEOREM

%%%%%%%%%%%%%%%%%%%%%%%%%%%%%% PROOF PROOF PROOF
\begin{proof}
 In the language of Panchenko \cite{MR1887174}, the collection of wedges $ \{W _{x,y} \;:\; x,y\in K_s\}$ 
 satisfy \emph{uniform entropy bounds}, namely the inequality \eqref{e:NW} holds. 
 And the statement above is then a particular instance of \cite{MR1887174}*{Corollary 1}. 
\end{proof}
%%%%%%%%%%%%%%%%%%%%%%%%%%%%%% PROOF PROOF PROOF

%%%%%%%%%%%%%%%%%%%%%%%%%%%%%%% SUBSECTION SUBSECTION SUBSECTION SUBSECTION
% %%%%%%%%%%%%%%%%%%%%%%%%%%%%%% SUBSECTION SUBSECTION SUBSECTION SUBSECTION 
%\subsection{Measure Concentration on the Sphere}%\label{ss.}
%
%An obstruction to some proof techniques in one bit sensing arises from the measure concentration phenomena 
%on the sphere.  
%
%%%%%%%%%%%%%%%%%%%%%%%%%%%%%%% THEOREM THEOREM THEOREM
%\begin{theorem}\label{t:sphere} Let $ A \subset \mathbb S ^{n}$ be a set of measure $ 1/2$, then for $ 0 < \delta $, there holds 
%\begin{equation*}
%\sigma (A _{\delta }) \ge 1 - 2 \operatorname {exp}(- (n+1) \delta ^2 /2)
%\end{equation*}
%where $ A _{\delta } := \{z\in \mathbb S ^{n} \;:\;  \inf _{x\in A} \lvert  x-z\rvert\leq \delta   \}$ is a $ \delta $-neighborhood of $ A$. 
%\end{theorem}
%%%%%%%%%%%%%%%%%%%%%%%%%%%%%%% THEOREM THEOREM THEOREM
%
%As a corollary, if we take a $ \delta $-neighborhood of a wedge $ W _{x,y}$, this set will have very large measure 
%if $ \delta \gtrsim 1/ \sqrt n$.  

\begin{bibdiv}

\begin{biblist}

\bib{MR3000572}{article}{
  author={Aistleitner, C.},
  author={Brauchart, J. S.},
  author={Dick, J.},
  title={Point sets on the sphere $\Bbb {S}^2$ with small spherical cap discrepancy},
  journal={Discrete Comput. Geom.},
  volume={48},
  date={2012},
  number={4},
  pages={990--1024},
  issn={0179-5376},
}

\bib{MR757769}{article}{
  author={Alexander, Kenneth S.},
  title={Probability inequalities for empirical processes and a law of the iterated logarithm},
  journal={Ann. Probab.},
  volume={12},
  date={1984},
  number={4},
  pages={1041--1067},
  issn={0091-1798},
  review={\MR {757769 (86b:60048)}},
}

\bib{14078246}{article}{
  author={{Baraniuk}, R.},
  author={Foucart, S.},
  author={Needell, D.},
  author={Plan, Y.},
  author={Wootters, M.},
  title={Exponential decay of reconstruction error from binary measurements of sparse signals},
  eprint={http://arxiv.org/abs/1407.8246},
  year={2014},
}

\bib{BL1}{article}{
  author={Bilyk, Dmitriy},
  author={Lacey, Michael T.},
  title={One Bit Sensing, Discrepancy, and Stolarsky Principle}, 
  eprint={http://arxiv.org/abs/arXiv:1511.08452},
  year={2015},
}

\bib{1Bit}{article}{
  author={Boufounos, P.},
  author={Baraniuk, R.},
  title={1-bit compressive sensing},
  journal={Proc. 42nd Annu. Conf. Inf. Sci. Syst., Princeton, NJ, },
  date={Mar. 2008},
  page={16Ð21},
}

\bib{MR2300700}{article}{
  author={Candes, Emmanuel J.},
  author={Tao, Terence},
  title={Near-optimal signal recovery from random projections: universal encoding strategies?},
  journal={IEEE Trans. Inform. Theory},
  volume={52},
  date={2006},
  number={12},
  pages={5406--5425},
  issn={0018-9448},
}

\bib{MR2755868}{article}{
  author={Indyk, Piotr},
  author={Szarek, Stanislaw},
  title={Almost-Euclidean subspaces of $\ell \sb 1\sp N$ via tensor products: a simple approach to randomness reduction},
  conference={ title={Approximation, randomization, and combinatorial optimization}, },
  book={ series={Lecture Notes in Comput. Sci.}, volume={6302}, publisher={Springer, Berlin}, },
  date={2010},
  pages={632--641},
}

\bib{150406170}{article}{
  author    = {Laurent Jacques},
  title     = {Small width, low distortions: quasi-isometric embeddings with quantized
               sub-Gaussian random projections},
  year      = {2015},
  epring = {http://arxiv.org/abs/1504.06170},
}

\bib{13051786}{article}{
  author={{Jacques}, L.},
  author={{Degraux}, K.},
  auth={ De Vleeschouwer, C.},
  title={Quantized Iterative Hard Thresholding: Bridging 1-bit and High-Resolution Quantized Compressed Sensing},
  eprint={http://hdl.handle.net/2078.1/134466},
  conference={title={Proceedings of International Conference on Sampling Theory and Applications 2013, Bremen, Germany},},
  year={2013},
}

\bib{MR3043783}{article}{
  author={Jacques, Laurent},
  author={Laska, Jason N.},
  author={Boufounos, Petros T.},
  author={Baraniuk, Richard G.},
  title={Robust 1-bit compressive sensing via binary stable embeddings of sparse vectors},
  journal={IEEE Trans. Inform. Theory},
  volume={59},
  date={2013},
  number={4},
  pages={2082--2102},
  issn={0018-9448},
}

\bib{MR2149924}{article}{
  author={Klartag, B.},
  author={Mendelson, S.},
  title={Empirical processes and random projections},
  journal={J. Funct. Anal.},
  volume={225},
  date={2005},
  number={1},
  pages={229--245},
  issn={0022-1236},
}

\bib{MR1887174}{article}{
  author={Panchenko, Dmitriy},
  title={Some extensions of an inequality of Vapnik and Chervonenkis},
  journal={Electron. Comm. Probab.},
  volume={7},
  date={2002},
  pages={55--65 (electronic)},
  issn={1083-589X},
}

\bib{MR3069959}{article}{
  author={Plan, Yaniv},
  author={Vershynin, Roman},
  title={One-bit compressed sensing by linear programming},
  journal={Comm. Pure Appl. Math.},
  volume={66},
  date={2013},
  number={8},
  pages={1275--1297},
}

\bib{MR3008160}{article}{
  author={Plan, Yaniv},
  author={Vershynin, Roman},
  title={Robust 1-bit compressed sensing and sparse logistic regression: a convex programming approach},
  journal={IEEE Trans. Inform. Theory},
  volume={59},
  date={2013},
  number={1},
  pages={482--494},
  issn={0018-9448},
}

\bib{MR3164174}{article}{
  author={Plan, Yaniv},
  author={Vershynin, Roman},
  title={Dimension reduction by random hyperplane tessellations},
  journal={Discrete Comput. Geom.},
  volume={51},
  date={2014},
  number={2},
  pages={438--461},
}

\bib{MR882849}{article}{
  author={Rhee, WanSoo T.},
  author={Talagrand, Michel},
  title={Martingale inequalities and NP-complete problems},
  journal={Math. Oper. Res.},
  volume={12},
  date={1987},
  number={1},
  pages={177--181},
  issn={0364-765X},
}

\bib{MR2199631}{article}{
  author={Schechtman, Gideon},
  title={Two observations regarding embedding subsets of Euclidean spaces in normed spaces},
  journal={Adv. Math.},
  volume={200},
  date={2006},
  number={1},
  pages={125--135},
}

\bib{MR0333995}{article}{
  author={Stolarsky, Kenneth B.},
  title={Sums of distances between points on a sphere. II},
  journal={Proc. Amer. Math. Soc.},
  volume={41},
  date={1973},
  pages={575--582},
  issn={0002-9939},
}

\bib{MR3184689}{book}{
  author={Talagrand, Michel},
  title={Upper and lower bounds for stochastic processes},
  series={Ergebnisse der Mathematik und ihrer Grenzgebiete. 3. Folge. A Series of Modern Surveys in Mathematics [Results in Mathematics and Related Areas. 3rd Series. A Series of Modern Surveys in Mathematics]},
  volume={60},
  note={Modern methods and classical problems},
  publisher={Springer, Heidelberg},
  date={2014},
  pages={xvi+626},
}

\end{biblist}
 \end{bibdiv}

\end{document}